\documentclass[11pt,a4paper]{amsart}
\usepackage{hyperref}
\usepackage{graphicx} 
\usepackage{wasysym}
\usepackage[mathscr]{eucal}

\usepackage[draft]{changes}

\usepackage{amsmath}

\usepackage{amssymb}
\usepackage{amsthm}
\usepackage{mathtools} 

\usepackage{amsfonts}
\usepackage{tikz}
\usepackage{enumerate}
\usepackage{csquotes}

\newtheorem{theorem}{Theorem}[section]
\newtheorem{lemma}[theorem]{Lemma}
\newtheorem{proposition}[theorem]{Proposition}
\newtheorem{corollary}[theorem]{Corollary}

\theoremstyle{definition}
\newtheorem{definition}[theorem]{Definition}
\newtheorem{example}[theorem]{Example}

\newtheorem{remark}[theorem]{Remark}

\providecommand{\ceil}[1]{{}^\lceil #1 {}^\rceil}

\begin{document}
	
	\title[Aggregation functions as lax morphisms of quantales]{Aggregation functions as lax morphisms of quantales}
	
	\author[A. Fructuoso-Bonet]{Alejandro Fructuoso-Bonet}
	\address[A. Fructuoso-Bonet]{Instituto Universitario de Matem\'atica Pura y Aplicada, Universitat Polit\`ecnica de Val\`encia, Camino de Vera s/n, 46022 Valencia, Spain}
	\email{afrubon@posgrado.upv.es}
	\author[J. Rodr\'{\i}guez-L\'opez]{Jes\'us Rodr\'{\i}guez-L\'opez}
	\address[J. Rodr\'{\i}guez-L\'opez]{Instituto Universitario de Matem\'atica Pura y Aplicada, Universitat Polit\`ecnica de Val\`encia, Camino de Vera s/n, 46022 Valencia, Spain}
	\thanks{The last author's research
		is part of the project PID2022-139248NB-I00 funded by MICIU/AEI/10.13039/501100011033 and ERDF/EU}
	\email{jrlopez@mat.upv.es}
	
	\subjclass[2020]{18D20; 18F75; 54E35; 54E70.}
	
	\keywords{Aggregation function; quantale; lax morphism; quasi-pseudometric; fuzzy quasi-pseudometric.}

	\begin{abstract}
		We will generalize the concept of aggregation function for mathematical structures as a certain function between quantales. In fact, these functions turn to be exactly the lax morphism of quantales. This provides a global framework for the study of aggregation functions. As a consequence of our theory, we are able to deduce several known results about the aggregation of metrics and fuzzy metrics.
	\end{abstract}
	
	\date{This is the author's version of the article accepted for publication in \textit{Fuzzy Sets and Systems}. 
		The final published version is available at Elsevier via DOI: \href{https://doi.org/10.1016/j.fss.2025.109395}{10.1016/j.fss.2025.109395}.}
	\maketitle

	%
	\section{Introduction}
	%
	
	In a general sense, aggregation means a process of fusing a set of numbers into a single one. This simple concept encompasses binary operations like addition or multiplication and other ones more sophisticated like aggregation employing integrals with respect to a nonadditive measure (see, for example, \cite{BookGraMariMesiPap}). This wide spectrum of aggregation methods opened a broad variety of fields of applications like decision theory, operations research, computer science, etc. Functions making an aggregation are usually known as \emph{aggregation functions}, and nowadays, there exists a well-established theory about them \cite{BookBeliPraCalvo,BookGraMariMesiPap,BookTorraNaru}. Moreover, due to their applications, it is a very active area of research.
	
	In addition to combining numbers, many researchers have studied how to use aggregation functions to fuse other mathematical structures. Maybe, the first example of this method is due to Dobo\v{s} and his collaborators \cite{BorsikDobos81,Dobos95,Dobos98} who characterized those functions $F:[0,+\infty)^I\to [0,+\infty)$ satisfying that given a family of metric spaces $\Big\{(X_i,d_i):i\in I\Big\}$ then $F\circ d$ is a metric on $\prod_{i\in I}X_i$ where $d:\prod_{i\in I}X_i\times \prod_{i\in I} X_i\to [0,+\infty)^I$ is given by
	$$d(x,y)=(d_i(x_i,y_i))_{i\in I}$$
	for all $x=(x_i)_{i\in I}, y=(y_i)_{i\in I}\in\prod_{i\in I} X_i.$
	
	Later on, Pradera and Trillas \cite{PraTri02} (see also \cite{PraTriCasti02a,MayorVale19}) studied a related problem characterizing the functions $F:[0,+\infty)^I\to [0,+\infty)$ verifying that for a family $\{d_i:i\in I\}$ of pseudometrics on a set $X$ then $F\circ \delta$ is a metric on $X$ where $\delta:X\times X\to [0,+\infty)^I$ is given by
	$$\delta(x,y)=(d_i(x,y))_{i\in I}$$
	for all $i\in I.$
	
	Afterward, other researchers analyzed the problem of the aggregation of other mathematical structures like quasi-metrics \cite{MassaVale13,MayorVale10,MinyaVale19}, norms \cite{HerMos91,MartinMayorVale11,PRL21}, fuzzy binary relations \cite{DrewDud06,DrewDud07,MayorReca16,SamiMesiBoden02}, fuzzy quasi-pseudometrics \cite{PRLVale21}, probabilistic quasi-uniformities \cite{PRL20b}, etc.

Current research on aggregation functions has developed on a case-by-case basis, leaving open the question of whether a unified theoretical framework could encompass these diverse aggregation processes. A review of previous investigations indicates that certain properties frequently appear in their characterizations, suggesting the potential existence of such a general theory. To address this gap, our research seeks to discover this framework. By leveraging this abstraction, we aim to provide a systematic and unified approach to aggregation functions that not only recovers and extends previous findings but also offers new insights into their properties. To achieve this, we first need a concept that encompasses the mathematical structures whose aggregation has already been studied.

	In the celebrated paper \cite{Lawv73}, Lawvere showed that the theory of enriched categories could be used to unify the theory of metric spaces and ordered spaces. This original idea has given rise to numerous investigations, especially in Quantitative Domain Theory, to develop computer domains. Usually, these investigations do not have the level of generality of Lawvere's work since they don't  use categories enriched in a complete symmetric monoidal closed category but enriched in a quantale. This reduces the complexity of the theory but is still general enough to comprehend the most compelling examples, such as metric and partially ordered spaces.
	
	In this regard, we must mention the work of Flagg and Kopperman \cite{Flagg92,Flagg97,FlaggKopp97,Kopp88}. They looked for a generalized concept of metric space that allows to obtain that every topological space is a generalized metric space (see also the starting work by Trillas and Alsina \cite{TrillasAlsina78}).  In this way, they used enriched categories over a quantale, giving rise to the concept of continuity space that includes, as particular cases, partially ordered sets, quasi-pseudometric spaces, probabilistic metric spaces, etc.

	As a result, it appears that enriched categories over a quantale could may serve as an effective framework for developing a generalized theory of aggregation functions of mathematical structures. In this way, the primary objective of our study is to confirm this by interpreting classical aggregation theory within the context of $\mathscr{V}$-categories, that is, categories enriched over a quantale $\mathscr{V}$. We believe this approach holds significant potential, as it can be applied to any mathematical structure that can be viewed as a $\mathscr{V}$-category. Consequently, the theory may also find applications in various fields where aggregation functions play an important role, such as decision-making, multi-criteria analysis, economics, and computer science, among others. Our work establishes a theoretical foundation that could facilitate future advancements in these interdisciplinary domains, bridging the gap between categorical methods and classical aggregation theory.

	This article is organized as follows. We devote Section \ref{sec:vcat} to recall the basic concepts about quantales and $\mathscr{V}$-categories emphasizing the role of certain prominent quantales for obtaining classical mathematical structures as $\mathscr{V}$-categories. In Section \ref{sec:pf}, we introduce the concept of preserving function (Definition \ref{def:preserving}) between quantales that will be shown to be an appropriate generalization of the concept of aggregation function. For example, we will demonstrate in Lemma \ref{lem:preserving_eqpmaf} that preserving functions between Lawvere quantales are extended quasi-pseudometric aggregation functions. One of the paper's main results is the characterization of preserving functions as lax morphisms of quantales (Theorem \ref{thm:plmq}). As a consequence of this result and others, we obtain some theorems already proved in previous researchers. In particular, we obtain results about aggregation functions of quasi-pseudometrics and fuzzy quasi-pseudometrics. We also fill in the existent gap in the literature about the characterization of functions aggregating quasi-pseudometrics. Furthermore, this more general point of view allows understanding the difference between the characterizations of quasi-metric aggregation functions and metric aggregation functions (see Remark \ref{rem:diff}).

	%
	\section{Categories enriched over quantales}\label{sec:vcat}
	%
	
	In this section we review the basic theory of quantales and categories enriched over quantales. Our basic references are \cite{BookMonoidalTopology,BookQuantales}.

	\begin{definition}[\mbox{\cite[Section II.1.10]{BookMonoidalTopology},\cite[Section 2.3]{BookQuantales}}]
		A \textbf{quantale} $(\mathscr{V},\preceq,\ast)$ is a complete lattice $(\mathscr{V},\preceq)$ such that $\ast:\mathscr{V}\times \mathscr{V}\to \mathscr{V}$ is an associative binary operation which distributes over suprema:
		\begin{align*}
			u\ast \left(\bigvee_{i\in I} v_i\right)&=\bigvee_{i\in I}(u\ast v_i),\\
			\left(\bigvee_{i\in I} v_i\right)\ast u&=\bigvee_{i\in I}(v_i\ast u).
		\end{align*}
		If $\ast$ is also commutative then $(\mathscr{V},\preceq,\ast)$ is a \textbf{commutative quantale}.
		
		A quantale is called \textbf{unital} if $\ast$ has a unit $1_\mathscr{V}.$ A unital quantale is \textbf{integral} if the unit is the top element of $\mathscr{V}.$
		
	\end{definition}

	\begin{remark}
		A unital quantale can be viewed as a closed monoidal ca\-tegory, which is also symmetric when the quantale is commutative (see, for example, \cite{BookMonoidalTopology}).
	\end{remark}

	In the rest of the paper, we will only consider commutative integral quantales, although for simplicity we use only the terminology quantale.
	
	\begin{remark}
		Notice that in an integral quantale $(\mathscr{V},\preceq,\ast)$ we have that $u\ast v\preceq u\wedge v$ for all $u,v\in \mathscr{V}.$ In fact, $u=u\ast (v\vee 1_\mathscr{V})=(u\ast v)\vee (u\ast 1_\mathscr{V})=(u\ast v)\vee u$ so $u\ast v\preceq u.$ In a similar way, $u\ast v\preceq v.$
	\end{remark}

	\begin{example}[\mbox{\cite[Example II.1.10.1]{BookMonoidalTopology}}]\label{ex:quantales}\
		\begin{enumerate}[(a)]
			\item Let $\ast$ be a triangular norm (t-norm for short) on $[0,1]$ (see, for example, \cite{BookTN}), that is, an associative, commutative binary operation $\ast\colon[0,1]\times [0,1]\rightarrow [0,1]$ with unit $1$, such that $a\ast b\leq c\ast d$ whenever $a\leq c$ and $b\leq d$, with $a,b,c,d\in [0,1]$. If $\ast$ is left-continuous then $([0,1],\leq,\ast)$ is a commutative integral quantale, where $\leq$ is the usual order.
			\item Let $\mathbf{2}=\{0,1\}$ be a set with two different elements endowed with the usual order. If $\ast$ is an arbitrary t-norm then $(\mathbf{2},\leq,\ast)$ is a commutative integral quantale.
			\item  Let us consider the opposite order $\leq^\text{\normalfont op}$ on the extended real line $[0,+\infty]$, that is, $x\leq^\text{\normalfont op} y$ if and only if $y\leq x$. If the sum $+$ on the real numbers is extended to $+\infty$ as usual, then $\mathsf{P}_+=([0,+\infty],\leq^\text{\normalfont op},+)$ is a commutative integral quantale called the \textbf{Lawvere quantale} \cite{CookWeiss22} (see also \cite[Example II.1.10.1.(3)]{BookMonoidalTopology}).


			\item Let $\Delta_+$ be the family of distance distribution functions given by
			$$\Delta_+=\{f:[0,\infty]\to [0,1]: f\text{ is isotone and left-continuous} \}$$
			where left-continuous means that $f(x)=\bigvee_{y<x}f(y)$ for all $x\in [0,\infty]$ (as usual, $\bigvee\varnothing =0$). Then $\Delta_+$ is a complete lattice when it is endowed with the pointwise order $\leq$. 
			Moreover, given a left-continuous t-norm $\ast$, $f,g\in\Delta_+ $ and $t\in [0,\infty]$ define $f\circledast g\in \Delta_+ $ as
			$$(f\circledast g)(t)=\bigvee_{r+s\leq t}f(r)\ast g(s)=\bigvee_{r+s= t}f(r)\ast g(t).$$
			Then $(\Delta_+,\leq,\circledast)$ is a quantale (see \cite{Flagg97,HofReis13,JagerShi19}) where the unit is $f_{0,1}:[0,\infty]\to [0,1]$ given by
			$$f_{0,1}(t)=\begin{cases}
				0&\text{ if }t=0,\\
				1&\text{ otherwise},
			\end{cases}$$
			for all $t\in [0,\infty].$ For simplicity, we will denote the quantale $(\Delta_+,\leq,\circledast)$ by $\Delta_+(\ast).$
		\end{enumerate}	
	\end{example}

	\begin{remark}\label{rem:product_quantales}
		Let $\Big\{(\mathscr{V}_i,\preceq_i,\ast_i):i\in I\Big\}$ be an arbitrary family of quantales. Let $\preceq:=\Pi_{i\in I}\preceq_i$ be the product partial order componentwisely defined and $\ast:=\Pi_{i\in I}~\ast_i :(\prod_{i\in I}\mathscr{V}_i)\times(\prod_{i\in I}\mathscr{V}_i)\to \prod_{i\in I}\mathscr{V}_i$ be the componentwise operation given by $(u\ast v)_i=u_i\ast_i v_i$ for all $u=(u_i)_i, v=(v_i)_i\in \prod_{i\in I}\mathscr{V}_i$ and all $i\in I.$ Then $(\prod_{i\in I}\mathscr{V}_i,\preceq,\ast)$ is also a quantale. If all the quantales of the family $\Big\{(\mathscr{V}_i,\preceq_i,\ast_i):i\in I\Big\}$ are equal to $(\mathscr{V},\preceq,\ast)$ we will abuse of notation by denoting also by $\preceq$ and $\ast$ the partial order and the operation on $\mathscr{V}^I.$ In particular, we will denote by $\mathsf{P}_+^I$ the quantale $(\prod_{i\in I} [0,+\infty],\leq^\text{\normalfont op},+)$ and by $\Delta^I_+(\ast)$ the quantale $(\prod_{i\in I} \Delta_+,\leq,\circledast).$
	\end{remark}

	We next recall the central concept of the paper: a $\mathscr{V}$-category, a category enriched over a quantale. As we will see in the next section, certain functions between $\mathscr{V}$-categories can been considered as suitable generalizations of aggregation functions.
	
	\begin{definition}[[\mbox{\cite[Section III.1.3]{BookMonoidalTopology}, c.f. \cite[Definition 3.1]{FlaggKopp97}}]
		Let $(\mathscr{V},\preceq,\ast)$ be a quantale. A \emph{$\mathscr{V}$-category} is a pair $(X,a)$ where $X$ is a nonempty set and $a:X\times X\to\mathscr{V}$ is a map such that
		\begin{itemize}
			\item[(VC1)] $1_\mathscr{V}\preceq a(x,x)$ for all $x\in X;$
			\item[(VC2)] $a(x,z)\ast a(z,y)\preceq a(x,y)$ for all $x,y,z\in X.$
		\end{itemize}
		Moreover, a $\mathscr{V}$-category $(X,a)$ is said to be:
		\begin{itemize}
			\item \emph{separated} if given $x,y \in X$, whenever $1_\mathscr{V}\preceq a(x,y)$ and $1_\mathscr{V}\preceq a(y,x)$ then $x=y.$
			\item \emph{symmetric} if $a(x,y)=a(y,x)$ for all $x,y\in X.$
		\end{itemize}
		
		A function $f:(X,a)\to (Y,b)$ between two $\mathscr{V}$-categories is called a \emph{$\mathscr{V}$-functor} if
		$$a(x,y)\preceq b(f(x),f(y))$$
		for all $x,y\in X.$
		
		The category whose objects are $\mathscr{V}$-categories and whose morphisms are $\mathscr{V}$-functors will be denoted by $\mathscr{V}$-$\mathsf{Cat}.$
		
	\end{definition}
	
	\begin{remark}
		We observe that every topological space can be viewed as a $\mathscr{V}$-category (see \cite{Flagg97,Kopp88}).
	\end{remark}
	
	We next collect some fundamental examples of $\mathscr{V}$-categories.
	
	\begin{example}[\mbox{\cite[Examples III.1.3.1]{BookMonoidalTopology},\cite{HofReis13}}]\label{ex:vcat}\
		\begin{enumerate}[(1)]
			\item $\mathbf{2}$-categories are preordered sets meanwhile $\mathbf{2}$-functors are isotone maps. In fact, given a $\mathbf{2}$-category $(X,a)$, the binary relation $\preceq_a$ given by $x\preceq_a y$ if and only if $a(x,y)=1$ is a preorder on $X.$ 
			
			In a converse way, if $(X,\preceq)$ is a partially ordered set then $a_\preceq:X\times X\to\mathscr{V}$ given by $a_{\preceq}(x,y)=1$ if $x\preceq y$ and $a_{\preceq}(x,y)=0$ otherwise, turns $(X,\preceq)$ into a $\mathbf{2}$-category $(X,a_\preceq)$.
			Therefore, $\mathbf{2}$-$\mathsf{Cat}$ is isomorphic to the category $\mathsf{POrd}$ of preordered sets. 
			
			Obviously, the category of separated $\mathbf{2}$-categories is isomorphic to the category of partially ordered sets.

			\item  $\mathsf{P}_+$-categories are equivalent to extended quasi-pseudometric spaces and $\mathsf{P}_+$-morphisms are equivalent to non-expansive maps, where $\mathsf{P}_+$ is the Lawvere quantale (see Example \ref{ex:quantales}).
			
			\noindent In fact, axioms of a $\mathsf{P}_+$-category are written in this case as
			\begin{quote}
				\begin{itemize}
					\item[(VC1)] $ 0\leq^{op} a(x,x)\Leftrightarrow  0\geq a(x,x)\Leftrightarrow 0=a(x,x);$
					\item[(VC2)] $a(x,y)+a(y,z)\leq^{op}a(x,z)\Leftrightarrow a(x,z)\leq a(x,y)+a(y,z),$
				\end{itemize}
			\end{quote}
			so $a$ is an extended quasi-pseudometric on $X.$
			
			It is also clear that an extended quasi-pseudometric is a $\mathsf{P}_+$-category, so $\mathsf{P}_+$-categories and extended quasi-pseudometric spaces are equivalent concepts. 
			
			Similarly, separated $\mathsf{P}_+$-categories and symmetric separated $\mathsf{P}_+$-categories are equivalent to extended quasi-metric spaces and extended metric spaces respectively.
			
			Moreover, a $\mathsf{P}_+$-functor $f:(X,a)\to (Y,b)$ verifies
			$$a(x,y)\geq b(f(x),f(y))$$
			for all $x,y\in X$, that is, $f$ is a non-expansive mapping. Hence, $\mathsf{P}_+$-$\mathsf{Cat}$ is isomorphic to the category $\mathsf{QPMet}$ of quasi-pseudometric spaces. 
			
			\item If we consider the quantale $\Delta_+(\ast):=(\Delta_+,\leq,\circledast
			)$ (see Example \ref{ex:quantales}), where $\ast$ is a continuous t-norm, then the category $\Delta_+(\ast)$-$\mathsf{Cat}$ is isomorphic to the category $\mathsf{FQPMet}(\ast)$ of fuzzy quasi-pseudometric spaces with respect to $\ast$ and fuzzy nonexpansive maps. 
			
			Notice that if $(X,a)$ is a $\Delta_+(\ast)$-category then $M_a:X\times X\times [0,+\infty]
			\to [0,1]$ defined as $M_a(x,y,t)=a(x,y)(t)$ for every $x,y\in X,$ $t\in [0,+\infty]$, that is, the evaluation of $a(x,y)$ at $t$, satisfies:
			\begin{quote}
				\begin{itemize}
					\item[(FM1)] $M_a(x,y,0)=a(x,y)(0)=0$ for all $x,y\in X;$
					\item[(FM2)] $M_a(x,x,t)=1$ for all $t>0;$
					\item[(FM3)] $M_a(x,z,s)\ast M_a(z,y,t)\leq M(x,y,s+t)$ for every $x,y,z\in X$, $s,t\in [0,+\infty];$
					\item[(FM4)] $M_a(x,y,\cdot)=a(x,y)$ is left-continuous for every $x,y\in X.$
				\end{itemize}
			\end{quote}
			The previous axioms are exactly the conditions that a pair $(M,\ast)$ must satisfy for being a fuzzy quasi-pseudometric on $X$ (see, for example, \cite{GregRo04}), where $M:X\times X\times [0,+\infty)\to [0,1]$ and $\ast$ is a continuos t-norm. Observe that $M_a(x,y,t)$ also exists when $t=\infty$ and $M_a(x,y,\infty)=\bigvee_{0\leq t<\infty} M_a(x,y,t).$ This is not required in the classical definition of a fuzzy quasi-pseudometric but there is no loss of generality if we add this property.\\
			Conversely, let $(M,\ast)$ be a fuzzy quasi-pseudometric on $X.$ If we define $m:X\times X\to\Delta_+(\ast)$ as
			$$m(x,y)(t)=\begin{cases}
				M(x,y,t) &\text{ if } 0\leq t < +\infty,\\
				\bigvee\limits_{0\leq s<+\infty} M(x,y,s)&\text{ if } t=+\infty,
			\end{cases}$$
			for every $x,y\in X$, $t\in [0,+\infty]$, then it is straightfoward to check that $(X,m)$ is a $\Delta_+(\ast)$-category. Observe that $m$ is the exponential mate $\ceil{M}$ of the extension of $M$ to $X\times X\times [0,+\infty].$
			
		\end{enumerate}
	\end{example}
	
	\begin{example}\label{ex:prodqpd}\
		\begin{enumerate}[(1)]
			\item Let $\Big\{(X_i,d_i):i\in I\Big\}$ be a family of (extended) (quasi-)(pseudo)metric spaces which can be considered as $\mathsf{P}_+$-categories. Then $(\prod_{i\in I} X_i,d_\Pi)$ is a $\mathsf{P}_+^I$-category where $d_\Pi:=\Pi_{i\in I} d_i:\prod_{i\in I} X_i\times \prod_{i\in I} X_i\to [0,+\infty]^I$ is the Cartesian product of the mappings $\{d_i\}_{i\in I}$ given by $$d_\Pi((x_i)_i,(y_i)_i)=(d_i(x_i,y_i))_i$$ for every $(x_i)_i,(y_i)_i\in \prod_{i\in I} X_i.$
			\item Let $X$ be a nonempty set and $\{d_i:i\in I\}$ be a family of (extended) (quasi-pseudo)metrics on $X.$ Then $(X,d_\triangle)$ is a $\mathsf{P}_+^I$-category where $d_\triangle:=\triangle_{i\in I} d_i:X\times X\to [0,+\infty]^I$ is the diagonal of the mappings $\{d_i\}_{i\in I}$ given by $$d_\triangle(x,y)=(d_i(x,y))_i$$ for every $x,y,\in X.$
			
			\item  If $(X,a)$ is a $\mathsf{P}_+^I$-category, for each $i\in I$ let $a_i:X\times X\to [0,+\infty]$ be the $i$th-coordinate function of $a$, that is, $a_i(x,y)=(a(x,y))_i$ for every $x,y\in X.$ Then $\Big\{(X,a_i):i\in I\Big\}$ is a family of $\mathsf{P}_+$-categories, that is, a family of extended quasi-pseudometric spaces.
			
		\end{enumerate}
	\end{example}
	
	\begin{remark}\label{rem:prodfqpd}
		If in the above example, we consider the quantale $\Delta_+(\ast)$ instead of $\mathsf{P}_+$, we obtain analogous conclusions for fuzzy quasi-pseudometrics. We will use a similar notation in this case.
	\end{remark}

	A lax homomorphism of monoidal categories \cite{BookMonoidalTopology} (see also \cite[p. 164]{McLane98}), referred to as closed functors in \cite[p. 149]{Lawv73},  simplifies to the following when the monoidal categories are quantales:
	
	\begin{definition}[\mbox{\cite[Section II.1.10]{BookMonoidalTopology},\cite{HofReis13}}]
		A map $F:(\mathscr{V},\preceq,\ast)\to(\mathscr{W},\curlyeqprec,\star)$ between two quantales is said to be a \emph{lax morphism of quantales} if
		\begin{itemize}
			\item $u\preceq v$ implies $F(u)\curlyeqprec F(v)$ for all $u,v\in\mathscr{V};$\hfill (isotone)
			\item $1_\mathscr{W}\curlyeqprec F(1_\mathscr{V});$
			\item $F(u)\star F(v)\curlyeqprec F(u\ast v)$ for all $u,v\in\mathscr{V}.$
		\end{itemize}
		
	\end{definition}

	\section{Preserving functions between quantales}\label{sec:pf}

	We next introduce a kind of function between quantales that seems to be apropriate for extending the notion of aggregation function.

	\begin{definition}\label{def:catpreserving}
		A map $F:(\mathscr{V},\preceq,\ast)\to(\mathscr{W},\curlyeqprec,\star)$ between two quantales is said to be \textbf{
			$\mathsf{Cat}$-preserving} if the map $\mathsf{F}:\mathscr{V}\text{-}\mathsf{Cat}\to\mathscr{W}\text{-}\mathsf{Cat},$ which assign to a $\mathscr{V}$-category $(X,a)$ the pair $(X,F\circ a),$ and to a $\mathscr{V}$-functor $f:(X,a)\to (Y,b)$ the map $\mathsf{F}(f):(X,F\circ a)\to (Y,F\circ b)$ given by $\mathsf{F}(f)(x)=f(x)$ for all $x\in X,$ is a functor.
		
		We will denote by $\mathscr{C}((\mathscr{V},\preceq,\ast),(\mathscr{W},\curlyeqprec,\star))$, or simply by $\mathscr{C}(\mathscr{V},\mathscr{W})$ if no confusion arises, the family of $\mathsf{Cat}$-preserving functions between the quantales $(\mathscr{V},\preceq,\ast)$ and $(\mathscr{W},\curlyeqprec,\star).$
		
		Under the above conditions, if we consider that the map $\mathsf{F}$ is restricted to the separated $\mathscr{V}$-categories and the separated $\mathscr{W}$-categories we say that $F$ is \textbf{separately $\mathsf{Cat}$-preserving}.
		
		In a similar way, we establish the concept of \textbf{symmetrically $\mathsf{Cat}$-preser\-ving function} when only symmetric categories are considered.
	\end{definition}
	
	\begin{definition}\label{def:preserving}
		A map $F:(\mathscr{V},\preceq,\ast)\to(\mathscr{W},\curlyeqprec,\star)$ between two quantales is said to be \textbf{preserving} if the map $\mathsf{F}:\mathrm{Obj}(\mathscr{V}\text{-}\mathsf{Cat})\to\mathrm{Obj}(\mathscr{W}\text{-}\mathsf{Cat}),$ which assign to a $\mathscr{V}$-category $(X,a)$ the pair $(X,F\circ a)$ is well-defined, that is, if $(X,F\circ a)$ is a $\mathscr{W}$-category.
		
		We will denote by $\mathscr{P}((\mathscr{V},\preceq,\ast),(\mathscr{W},\curlyeqprec,\star))$, or simply by $\mathscr{P}(\mathscr{V},\mathscr{W})$ if no confusion arises, the family of preserving functions between the quantales $(\mathscr{V},\preceq,\ast)$ and $(\mathscr{W},\curlyeqprec,\star).$
		
		If the map $F$ satisfies that $(X,F\circ a)$ is a separated (resp. symmetric) $\mathscr{W}$-category whenever $(X,a)$ is a separated (resp. symmetric) $\mathscr{V}$-category then $F$ is said to be \textbf{separately preserving} (resp. \textbf{symmetrically preserving}).  The family of separately (resp. symmetrically) preserving functions will be denoted by $\mathsf{se}\mathscr{P}((\mathscr{V},\preceq,\ast),(\mathscr{W},\curlyeqprec,\star))$ (resp. $\mathsf{sy}\mathscr{P}((\mathscr{V},\preceq,\ast),(\mathscr{W},\curlyeqprec,\star))$).
	\end{definition}
	
	We next present some examples of the family of $\mathsf{Cat}$-preserving functions $\mathscr{C}(\mathscr{V},\mathscr{W})$ and preserving functions $\mathscr{P}(\mathscr{V},\mathscr{W})$ for particular quantales $\mathscr{V}$ and $\mathscr{W}.$
	
	\begin{example}
		Let $F:(\mathbf{2},\leq,\ast)\to (\mathbf{2},\leq,\ast)$ be a function. Then $F$ is $\mathsf{Cat}$-preserving if and only if $F$ is the identity or $F$ is the identically 1 function. Otherwise, $F(1)=0$ and in this case, given a $\mathbf{2}$-category $(X,a)$ (a partially ordered set, Example \ref{ex:vcat})  and $x\in X$, then $(X,F\circ a)$ is not a $\mathbf{2}$-category since $1\not\leq F(a(x,x))=F(1)=0$ failing to fulfill (VC1).
	\end{example}

	\begin{example}\label{ex:preservingm}
		The concept of preserving function $F:\mathsf{P}_+\to \mathsf{P}_+$ is equivalent to that of an extended quasi-pseudometric aggregation function \cite{MassaVale13}, that is, a function satisfying that if $(X,d)$ is an extended quasi-pseudometric space (a $\mathsf{P}_+$-category) then $(X,F\circ d)$ is also an extended quasi-pseudometric space.
		
		If we consider separately preserving functions, symmetrically preserving functions, or separately and symmetrically preserving functions we obtain extended quasi-metric aggregation functions, extended pseudometric aggregation functions, and extended metric aggregation functions respectively.

		Moreover, consider the family $\mathsf{QPMA}$ of quasi-pseudometric aggregation functions, that is, functions  $G:[0,+\infty)\to [0,+\infty)$ verifying that  $(X,G\circ d)$ is a quasi-pseudometric space whenever $(X,d)$ so is. Then this family embeds into $\mathscr{P}(\mathsf{P}_+,\mathsf{P}_+)$. In fact, for each $G\in \mathsf{QPMA}$ consider the function $\overline{G}:\mathsf{P}_+\to \mathsf{P}_+$ given by 
		$$\overline{G}(x)=\begin{cases} G(x) &\text{ if }x\in [0,+\infty),\\ +\infty &\text{ if }x=+\infty.\end{cases}$$ 
		Then $\overline{G}\in \mathscr{P}(\mathsf{P}_+,\mathsf{P}_+).$ Therefore, 
		
		$$\begin{array}{ccc}
			
			\iota:\mathsf{QPMA}&\to&\mathscr{P}(\mathsf{P}_+,\mathsf{P}_+)\\[5pt]
			G&\to&\overline{G}
		\end{array}$$
		is well-defined and $\iota(\mathsf{QPMA})$ is the subfamily of functions $F$ of $\mathscr{P}(\mathsf{P}_+,\mathsf{P}_+)$ verifying 
		$F^{-1}(+\infty)=\{+\infty\}.$ 
		
		
	\end{example}
	
	The above example can be generalized when the quantale of the domain is $\mathsf{P}_+^I$. To describe this family we give the following definition.
	
	\begin{definition}[\mbox{see \cite[Definition 1]{MassaVale13}}]\label{def:eqpmaf}
		A function $F: [0,+\infty]^I\rightarrow [0,+\infty]$ is said to be 
		\begin{itemize}
			\item an \textbf{extended quasi-pseudometric aggregation function on products} if whenever $\big\{(X_i,d_i):i\in I\big\}$ is a family of extended quasi-pseudometric spaces, then $F\circ d_\Pi$ is an extended quasi-pseudometric on $\prod_{i\in I} X_i$ where $d_\Pi:\left(\prod_{i\in I} X_i\right)\times \left(\prod_{i\in I} X_i\right)\to [0,+\infty]^I$ is given by
			$$d_\Pi(x,y)=(d_i(x_i,y_i))_{i\in I}$$
			for every $x,y\in \prod_{i\in I} X_i.$
			\item an \textbf{extended quasi-pseudometric aggregation function on sets} if whenever set $\{d_{i}:i \in I\}$ is a family of extended quasi-pseudometrics on a nonempty set $X$, then $F \circ d_\Delta$ is an extended quasi-pseudometric on $X$, where $d_\Delta: X \times X \rightarrow [0,+\infty]^{I}$ is given by $$d_\Delta(x,y)=(d_{i}(x,y))_{i \in I}$$
			for every $x,y \in X.$
		\end{itemize}
		
		Similar definitions can be stated for extended quasi-metrics, extended pseudometrics and extended metrics.
	\end{definition}
	
	\begin{remark}\label{rem:qpmaf}
		Extended quasi-metric aggregation functions were studied by Massanet and Valero in \cite{MassaVale13}.
		
		Moreover, if in the previous definition we consider  functions  $F:[0,+\infty)^I\rightarrow [0,+\infty)$ (only defined for finite nonnegative real values) and (quasi-)(pseudo)\-metrics, we obtain the definition of a (quasi-)(pseudo)metric aggregation function on products or on sets. These families of functions have been previously studied in the literature. For example,
		\begin{itemize}
			\item metric aggregation functions on products were studied mainly by Dobo\v{s} \cite{Dobos98} under the name metric preserving functions (see also \cite{Corazza99}); 
			\item quasi-metric aggregation functions on products were studied by Mayor and Valero  \cite{MayorVale10} under the name asymmetric distance aggregation function;
			\item metric aggregation functions on sets were studied by Mayor and Valero  \cite{MayorVale19} under the name $n$-metric preserving functions; 
			\item quasi-metric aggregation functions on sets were studied by Mi\~nana and Valero  \cite{MinyaVale19} under the name $n$-quasi-metric aggregation function.
		\end{itemize}
		
	\end{remark}
	
	We next show that, as in Example \ref{ex:preservingm}, the family $\mathscr{P}(\mathsf{P}_+^I,\mathsf{P}_+)$ is equal to the family of extended quasi-pseudometric aggregation functions on products or on sets (we will see later, in Theorem \ref{thm:plmq}, that $\mathscr{P}(\mathsf{P}_+^I,\mathsf{P}_+)=\mathscr{C}(\mathsf{P}_+^I,\mathsf{P}_+)$).

	\begin{lemma}\label{lem:preserving_eqpmaf}
		The following statements are equivalent:
		\begin{enumerate}
			\item $F:\mathsf{P}_+^I\to \mathsf{P}_+$ is preserving;
			\item $F:[0,+\infty]^I\to [0,+\infty]$ is an extended quasi-pseudometric aggregation function on products;
			\item $F:[0,+\infty]^I\to [0,+\infty]$ is an extended quasi-pseudometric aggregation function on sets.
		\end{enumerate}
		Consequently,
		$$\mathscr{P}(\mathsf{P}_+^I,\mathsf{P}_+)=\mathsf{EQPMAP}^I=\mathsf{EQPMAS}^I,$$
		where $\mathsf{EQPMAP}^I$ {\rm(}resp. $\mathsf{EQPMAS}^I${\rm)} denotes the family of extended quasi-pseudometric aggregation functions on products {\rm(}resp. on sets{\rm)} defined on $[0,+\infty]^I$.
	\end{lemma}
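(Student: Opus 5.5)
I will prove the cycle of implications $(1)\Rightarrow(2)\Rightarrow(3)\Rightarrow(1)$. The whole argument rests on Example \ref{ex:prodqpd}: a family of extended quasi-pseudometrics, taken either as a product or as a diagonal, yields a $\mathsf{P}_+^I$-category. Conversely, every $\mathsf{P}_+^I$-category splits into its coordinate $\mathsf{P}_+$-categories. Throughout, I read (VC1) and (VC2) for $\mathsf{P}_+^I$ componentwise with the reversed order. Thus $(X,a)$ is a $\mathsf{P}_+^I$-category exactly when every coordinate satisfies $a_i(x,x)=0$ and $a_i(x,y)\le a_i(x,z)+a_i(z,y)$. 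A $\mathsf{P}_+$-category is exactly an extended quasi-pseudometric space, by Example \ref{ex:vcat}(2).

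For $(1)\Rightarrow(2)$, let $\{(X_i,d_i):i\in I\}$ be a family of extended quasi-pseudometric spaces. By Example \ref{ex:prodqpd}(1), $(\prod_{i} X_i,d_\Pi)$ is a $\mathsf{P}_+^I$-category. Since $F$ is preserving, $(\prod_i X_i,F\circ d_\Pi)$ is a $\mathsf{P}_+$-category, that is, $F\circ d_\Pi$ is an extended quasi-pseudometric.

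For $(2)\Rightarrow(3)$, let $\{d_i:i\in I\}$ be extended quasi-pseudometrics on a set $X$. Put $X_i=X$ for all $i$ and consider the diagonal embedding $\delta:X\to X^I$, $\delta(x)=(x)_{i\in I}$. Then $d_\triangle(x,y)=d_\Pi(\delta(x),\delta(y))$. The map $(x,y)\mapsto F(d_\Pi(\delta(x),\delta(y)))$ is the restriction of the extended quasi-pseudometric $F\circ d_\Pi$ along $\delta$. Restricting along a map preserves both the vanishing on the diagonal and the triangle inequality, so $F\circ d_\triangle$ is an extended quasi-pseudometric on $X$.

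For $(3)\Rightarrow(1)$, let $(X,a)$ be any $\mathsf{P}_+^I$-category. By Example \ref{ex:prodqpd}(3), each coordinate $a_i$ is an extended quasi-pseudometric on $X$, and $a=\triangle_{i\in I}a_i$ by construction. Hypothesis (3) then says that $F\circ a$ is an extended quasi-pseudometric on $X$, i.e. $(X,F\circ a)$ is a $\mathsf{P}_+$-category. Hence $F$ is preserving.

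The only step needing any care is the last one, namely the identification of an arbitrary $\mathsf{P}_+^I$-category with a diagonal of coordinate quasi-pseudometrics. This depends on the order and operation on $\mathsf{P}_+^I$ being componentwise, as stated in Remark \ref{rem:product_quantales}, and on the unit being $(0)_{i\in I}$. Once these are checked, the equalities $\mathscr{P}(\mathsf{P}_+^I,\mathsf{P}_+)=\mathsf{EQPMAP}^I=\mathsf{EQPMAS}^I$ are simply a restatement of the equivalences.
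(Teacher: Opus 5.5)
Your proof is correct and follows the same route as the paper. Both run the cycle $(1)\Rightarrow(2)\Rightarrow(3)\Rightarrow(1)$, using Example~\ref{ex:prodqpd}(1) for the product step and the coordinate decomposition of Example~\ref{ex:prodqpd}(3) to return to a preserving map. The only addition is that you spell out $(2)\Rightarrow(3)$, which the paper calls straightforward, by pulling back $F\circ d_\Pi$ along the diagonal map $X\to X^I$; this is a valid way to fill in that step.
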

	
	\begin{proof}
		(1) $\Rightarrow$ (2) Let $\big\{(X_i,d_i):i\in I\big\}$ be an arbitrary family of extended quasi-pseudometric spaces, that is,  a family of $\mathsf{P}_+$-categories.  Then $(\prod_{i\in I} X_i, d_\Pi)$ is a $\mathsf{P}_+^I$-category (see Example \ref{ex:prodqpd}.(1)). Since $F$ is preserving then $(\prod_{i\in I} X_i,F\circ d_\Pi)$ is a $\mathsf{P}_+$-category, that is, an extended quasi-pseudometric space. Therefore, $F$ is an extended quasi-pseudometric aggregation on products.
		
		\noindent (2) $\Rightarrow$ (3) This is straightforward.
		
		\noindent (3) $\Rightarrow$ (1) Suppose that $F:[0,+\infty]^I\to [0,+\infty]$ is an extended quasi-pseudometric aggregation function on sets. Given a $\mathsf{P}_+^I$-category $(X,a)$ then $\big\{(X,a_i):i\in I\big\}$ is a family of extended quasi-pseudometric spaces where $a_i$ is the $i$th coordinate function of $a$ (see Example \ref{ex:prodqpd}.(3)). By assumption  $(X,F\circ a_\Pi)=(X,F\circ a)$ is an extended quasi-pseudometric space, that is, a  $\mathsf{P}_+$-category. Therefore, $F$ is preserving.
	\end{proof}

	\begin{remark}\label{rem:extended}
		Since for any sets of indices $I$, the pair $([0,+\infty)^I,\leq^{op})$ is not a complete lattice, then $([0,+\infty)^I,\leq^{op},+)$ it is not a quantale. Thus quasi-pseudometric aggregation functions $F:[0,+\infty)^I\to [0,+\infty)$ (see Remark \ref{rem:qpmaf}) do not adapt to the framework of preserving functions between quantales. 
		
		However, as in Example \ref{ex:preservingm}, the family $\mathsf{QPMAP}^I$ of quasi-pseudometric aggregation functions on products embeds into $\mathscr{P}(\mathsf{P}_+^I,\mathsf{P}_+)$. For any $F\in \mathsf{QPMAP}^I$
		consider the function $\overline{F}:\mathsf{P}_+^I\to \mathsf{P}_+$ given by
		$$\overline{F}(x)=\begin{cases}
			F(x)&\text{ if }x_i\neq +\infty\text{ for all }i\in I,\\
			+\infty&\text{ otherwise },
		\end{cases}$$
		for every $x\in [0,+\infty]^I.$ An easy computation shows that $\overline{F}$ is an extended quasi-pseudometric aggregation function on products, so by the previous result, $\overline{F}:\mathsf{P}_+^I\to \mathsf{P}_+$ is preserving.
		
		Since the map
		
		$$\begin{array}{ccc}
			
			\iota:\mathsf{QPMAP}^I&\to&\mathscr{P}(\mathsf{P}_+^I,\mathsf{P}_+)\\[5pt]
			F&\to&\overline{F}
		\end{array}$$
		is also injective then it is an embedding and $\iota(\mathsf{QPMAP}^I)$ is the subfamily of functions $F$ of $\mathscr{P}(\mathsf{P}_+^I,\mathsf{P}_+)$ verifying 
		$F^{-1}(+\infty)=\{x\in [0,+\infty]^I:x_j=+\infty\text{ for some }j\in I\}.$ 
		
		Notice that, since $\mathscr{P}(\mathsf{P}_+^I,\mathsf{P}_+)$ is also the family of extended quasi-pseudometric aggregation functions on sets, the previous procedure also works for the family $\mathsf{QPMAS}^I$ of quasi-pseudometric aggregation functions on sets and shows that $\mathsf{QPMAP}^I=\mathsf{QPMAS}^I.$
	\end{remark}
	
	\begin{remark}\label{rem:spreserving_epmaf}
		Lemma \ref{lem:preserving_eqpmaf} remains true if we replace \textquote{preserving} by \textquote{symmetrically preserving} and \textquote{extended quasi-pseudometric aggregation function} by \textquote{extended pseudometric aggregation function}. 
		
		We also observe that Pradera and Trillas \cite{PraTri02} previously noticed that pseudometric aggregation function on products and pseudometric aggregation function on sets are equivalent concepts. 
	\end{remark}
	
	At this point, it is natural to wonder whether  Lemma \ref{lem:preserving_eqpmaf} is still valid when we consider separately preserving functions. In this case, at this moment, we can only obtain the following result (first inclusion's proof is similar to (1) $\Rightarrow$ (2) of Lemma \ref{lem:preserving_eqpmaf}).

	\begin{lemma}\label{lem:preserving_eqmaf}
		If $F:\mathsf{P}_+^I\to \mathsf{P}_+$ is separately preserving then it is an extended quasi-metric aggregation function on products.

		Consequently,
		$$\mathsf{se}\mathscr{P}(\mathsf{P}_+^I,\mathsf{P}_+)\subseteq\mathsf{EQMAP}^I\subseteq\mathsf{EQMAS}^I,$$
		where $\mathsf{EQMAP}^I$ {\rm(}resp. $\mathsf{EQMAS}^I${\rm)} denotes the family of extended quasi-metric aggregation functions on products {\rm(}resp. on sets{\rm)} defined on $[0,+\infty]^I.$
	\end{lemma}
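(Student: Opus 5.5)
The plan follows the argument for (1) $\Rightarrow$ (2) in Lemma \ref{lem:preserving_eqpmaf}, but now we have to track separatedness. Take an arbitrary family $\big\{(X_i,d_i):i\in I\big\}$ of extended quasi-metric spaces, i.e. separated $\mathsf{P}_+$-categories. By Example \ref{ex:prodqpd}.(1), $(\prod_{i\in I}X_i,d_\Pi)$ is a $\mathsf{P}_+^I$-category. The one genuinely new step is to check that it is \emph{separated} as a $\mathsf{P}_+^I$-category.

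The unit of $\mathsf{P}_+^I$ is the constant family $0$, and the order is the componentwise opposite order. So $1_{\mathsf{P}_+^I}\preceq d_\Pi(x,y)$ means $d_i(x_i,y_i)\le 0$, that is, $d_i(x_i,y_i)=0$ for every $i\in I$. Suppose this holds for both $d_\Pi(x,y)$ and $d_\Pi(y,x)$. Then for each $i$ we have $d_i(x_i,y_i)=0=d_i(y_i,x_i)$, and separatedness of $(X_i,d_i)$ gives $x_i=y_i$. Hence $x=y$. Since $F$ is separately preserving, $(\prod_{i\in I}X_i,F\circ d_\Pi)$ is a separated $\mathsf{P}_+$-category, i.e. an extended quasi-metric space. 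This is exactly the statement that $F\in\mathsf{EQMAP}^I$, which gives the first inclusion.

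For the second inclusion $\mathsf{EQMAP}^I\subseteq\mathsf{EQMAS}^I$ I would mimic (2) $\Rightarrow$ (3) of Lemma \ref{lem:preserving_eqpmaf}, which is stated there as straightforward but needs a short argument. Let $\{d_i:i\in I\}$ be extended quasi-metrics on one set $X$. Apply the product hypothesis to the family $(X_i,d_i)=(X,d_i)$, and restrict $F\circ d_\Pi$ along the diagonal embedding $\delta\colon X\to X^I$, $x\mapsto(x)_{i\in I}$. Then $F\circ d_\Pi(\delta x,\delta y)=F(d_\triangle(x,y))$.

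The axioms (VC1) and (VC2) are inherited by restriction, so $F\circ d_\triangle$ is an extended quasi-pseudometric. Separatedness is also inherited, because $\delta$ is injective: if $F\circ d_\triangle(x,y)=0=F\circ d_\triangle(y,x)$, then separatedness of $F\circ d_\Pi$ gives $\delta x=\delta y$, hence $x=y$. This requires $I\neq\varnothing$, which is implicit throughout the paper.

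I do not expect a real obstacle. The only points needing care are translating the separatedness condition in the product quantale $\mathsf{P}_+^I$ correctly, with the unit being $0$ and the opposite order, and verifying that restriction along an injective map preserves separatedness.
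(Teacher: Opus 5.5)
Your proof is correct and follows the same route the paper indicates. For the first inclusion you run the argument of (1) $\Rightarrow$ (2) in Lemma \ref{lem:preserving_eqpmaf}, adding the correctly handled check that $d_\Pi$ is separated in $\mathsf{P}_+^I$; for the second inclusion you use the diagonal restriction behind (2) $\Rightarrow$ (3), and your observation that this step needs $I\neq\varnothing$ (so that the diagonal map is injective) is a legitimate point the paper leaves implicit.
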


	Later, on Theorem \ref{thm:eqmaf_spf} we will show that $\mathsf{se}\mathscr{P}(\mathsf{P}_+^I,\mathsf{P}_+)=\mathsf{EQMAP}^I.$ Nonetheless, we can now provide an example of an extended quasi-metric aggregation function on sets which does not belong to $\mathsf{se}\mathscr{P}(\mathsf{P}_+^I,\mathsf{P}_+)$.
	
	\begin{example}\label{ex:sets_no_product}
		Let $F:[0,+\infty]^2\to [0,+\infty]$ given by
		$$F(x_1,x_2)=\begin{cases}
			0&\text{ if }x_1=0,\\
			1&\text{ if }x_1\neq 0,
		\end{cases}$$
		for every $(x_1,x_2)\in [0,+\infty]^2.$
		
		Let $X$ be a nonempty set and $d_1,d_2$ be two extended quasi-metrics on $X$. Let us check that $F\circ d_\Delta$ is an extended quasi-metric on $X.$ 
		It is obvious that $F\circ d_\Delta(x,x)=F(d_1(x,x),d_2(x,x))=F(0,0)=0$ for all $x\in X.$ Moreover, suppose that 
		$$F(d_\Delta(x,y))=F(d_\Delta(y,x))=F(d_1(x,y),d_2(x,y))=F(d_1(y,x),d_2(y,x))=0$$ for some $x,y\in X.$ By definition of $F,$ $d_1(x,y)=d_1(y,x)=0$ so $x=y$ since $d_1$ is an extended quasi-metric on $X.$ 
		
		Finally, given $x,y,z\in X,$ if $d_1(x,y)=0$ then it is clear that
		$$F\circ d_\Delta(x,y)=0\leq F\circ d_\Delta(x,z)+F\circ d_\Delta(z,y).$$
		If $d_1(x,y)\neq 0$ then, by the triangle inequality of $d_1$, we infer that $d_1(x,z)\neq 0$ or $d_1(z,y)\neq 0$. Hence $F\circ d_\Delta(x,z)\neq 0$ or $F\circ d_\Delta(z,y)\neq 0$ which proves the triangle inequality in this case.
		
		Therefore, $F\circ d_\Delta$ is an extended quasi-metric on $X,$ so $F$ is an extended quasi-metric aggregation function on sets.
		
		However, let us consider the separated $\mathsf{P}_+^2$-category $(Y,a)$ where $Y=\{y_1,y_2\}$ and $a:Y\times Y\to \mathsf{P}_+^2$ is given by
		$$a(y_i,y_j)=\begin{cases}
			(0,0)&\text{ if  }i= j \text{ or }i=1,j=2,\\
			(0,1)&\text{ if }i=2,j=1.
		\end{cases}$$
		Then $(Y,F\circ a)$ is not a separated $\mathsf{P}_+$-category since $y_1\neq y_2$ but
		$$F\circ a(y_1,y_2)=F(0,0)=0=F(0,1)=F\circ a(y_2,y_1).$$
		Consequently, $F$ is not a separately preserving function between the quantales $\mathsf{P}_+^2$ and $\mathsf{P}_+.$
		
	\end{example}

	For obtaining a characterization of the preserving functions between quantales, we will make use of the following concepts which are generalizations of previous notions given in \cite{Dobos98,MayorVale10,PRLVale21}.

		\begin{definition}
		Let $(S,\leq,\cdot)$ be an ordered semigroup, that is, a semigroup endowed with a partial order compatible with the operation. A triplet $(x,y,z)\in S^3$ is said to be an \textbf{asymmetric triangle triplet} on $(S,\leq,\cdot)$ if $x\cdot y\leq z.$ Moreover, it is said to be a \textbf{triangle triplet} if every permutation of the triplet is an asymmetric triangle triplet.
		
		A function $F:(S,\leq,\cdot)\to(M,\leqslant,\ast)$ between two ordered semigroups is said to \textbf{preserve (asymmetric) triangle triplets} if $(F(x),F(y),F(z))$ is a(n) (asymmetric) triangle triplet on $(M,\leqslant,\ast)$ whenever $(x,y,z)$  is a(n) (asymmetrict) triangle triplet on $(S,\leq,\cdot).$
	\end{definition}

%
	
	Next
	we prove that preserving functions between commutative integral quantales are exactly lax morphisms of quantales. 
	
	\begin{theorem}\label{thm:plmq}
		Let $(\mathscr{V},\preceq,\ast),(\mathscr{W},\curlyeqprec,\star)$ be two commutative integral quantales. The following statements are equivalent:
		\begin{enumerate}[(1)]
			\item $F:(\mathscr{V},\preceq,\ast)\to(\mathscr{W},\curlyeqprec,\star)$ is $\mathsf{Cat}$-preserving; 
			\item  $F:(\mathscr{V},\preceq,\ast)\to(\mathscr{W},\curlyeqprec,\star)$ is preserving; 
			\item $F:(\mathscr{V},\preceq,\ast)\to(\mathscr{W},\curlyeqprec,\star)$ is a lax morphism;
			\item $F$ preserves asymmetric triangle triplets and $1_\mathscr{W}=F(1_\mathscr{V}).$
		\end{enumerate}
		Therefore,
		$$\mathscr{P}(\mathscr{V},\mathscr{W})=\mathscr{C}(\mathscr{V},\mathscr{W}).$$
	\end{theorem}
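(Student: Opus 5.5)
I plan to prove the cycle (1) $\Rightarrow$ (2) $\Rightarrow$ (3) $\Rightarrow$ (1), and to handle (4) separately through (3) $\Leftrightarrow$ (4). The step (1) $\Rightarrow$ (2) is immediate: if $\mathsf{F}$ is a functor, it is in particular well-defined on objects.

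For (3) $\Rightarrow$ (1), take a $\mathscr{V}$-category $(X,a)$.
\begin{itemize}
\item (VC1): $1_\mathscr{W}\curlyeqprec F(1_\mathscr{V})\curlyeqprec F(a(x,x))$, using isotonicity and $1_\mathscr{V}\preceq a(x,x)$.
\item (VC2): $F(a(x,z))\star F(a(z,y))\curlyeqprec F(a(x,z)\ast a(z,y))\curlyeqprec F(a(x,y))$, using the lax inequality and then isotonicity.
\item Functors: if $a(x,y)\preceq b(f(x),f(y))$, isotonicity gives $F(a(x,y))\curlyeqprec F(b(f(x),f(y)))$. Identities and composition are preserved trivially, since $\mathsf{F}$ acts as the identity on maps.
\end{itemize}

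The substantive step is (2) $\Rightarrow$ (3), where we must build test $\mathscr{V}$-categories on small sets.
\begin{itemize}
\item Unit: take a singleton $X=\{x\}$ with $a(x,x)=1_\mathscr{V}$. Preservation gives $1_\mathscr{W}\curlyeqprec F(1_\mathscr{V})$.
\item Isotonicity: given $u\preceq v$, take $X=\{x,y\}$ with $a(x,x)=a(y,y)=1_\mathscr{V}$, $a(x,y)=v$, $a(y,x)=u$. For (VC2) the only nontrivial instances are $a(x,y)\ast a(y,x)=v\ast u\preceq 1$ and $a(y,x)\ast a(x,y)\preceq 1$, both true by integrality; the mixed instances are of the form $w\ast 1=w$. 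This category does not compare $u$ and $v$ directly, so I would use three points instead. Let $X=\{x,y,z\}$ with all diagonal values $1$, $a(x,z)=u$, $a(x,y)=u$, $a(y,z)=v$, and every other off-diagonal value equal to $\bot$. Since $u\preceq v$ and $\bot\ast w=\bot$, each (VC2) instance should hold: $a(x,y)\ast a(y,z)=u\ast v\preceq u=a(x,z)$ is fine, but I still need to verify the remaining instances that involve $\bot$ on the right-hand side. The simplest check is the chain $x\to y\to z$ with the reverse directions set to $\bot$, which makes all compositions through a reverse arrow equal to $\bot$. I would take a cleaner two-point design: $a(x,y)=u$, $a(y,x)=\bot$, paired with monotonicity derived from the triplet $(F(1),F(u),F(v))$ below. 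Applying (VC2) to $F\circ a$ gives $F(1)\star F(u)\curlyeqprec F(v)$ whenever $1\ast u\preceq v$. Combined with $1_\mathscr{W}\curlyeqprec F(1_\mathscr{V})$ and the integrality equality $F(1)=1_\mathscr{W}$, this yields $F(u)\curlyeqprec F(v)$.
\item Lax inequality: take $a(x,y)=u$, $a(y,z)=v$, $a(x,z)=u\ast v$, with the reverse values $\bot$. Then (VC2) for $F\circ a$ gives $F(u)\star F(v)\curlyeqprec F(u\ast v)$.
\end{itemize}
So the key lemma is: for any $u,v,w$ with $u\ast v\preceq w$, the three-point set with $a(x,y)=u$, $a(y,z)=v$, $a(x,z)=w$, diagonal $1$, and reverse values $\bot$ is a $\mathscr{V}$-category. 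Checking this means verifying all 27 instances of (VC2). Those that pass through a $\bot$ arrow give $\bot$; those that use a diagonal $1$ give equalities. The only genuine instance is $u\ast v\preceq w$. The same lemma therefore gives (2) $\Rightarrow$ (4) directly, because preserving asymmetric triangle triplets is exactly $F(u)\star F(v)\curlyeqprec F(w)$. Since $\mathscr{W}$ is integral, $1_\mathscr{W}\curlyeqprec F(1_\mathscr{V})$ forces $F(1_\mathscr{V})=1_\mathscr{W}$.

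For (4) $\Rightarrow$ (3), I would apply the triplet $(1,u,v)$ for $u\preceq v$: it gives $1_\mathscr{W}\star F(u)=F(u)\curlyeqprec F(v)$, which is isotonicity. The triplet $(u,v,u\ast v)$ gives the lax inequality. The unit condition holds by hypothesis. This closes the equivalences, and the final equality $\mathscr{P}(\mathscr{V},\mathscr{W})=\mathscr{C}(\mathscr{V},\mathscr{W})$ follows. The main obstacle I expect is the careful verification that the three-point test category satisfies (VC2), and in particular the bookkeeping showing that the $\bot$ reverse entries never violate the inequality.
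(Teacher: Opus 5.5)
Your proof is correct and is essentially the paper's: both encode the needed inequality in a three-point test $\mathscr{V}$-category. The difference is packaging. Your $\bot$-padded category works for an arbitrary asymmetric triangle triplet, so you get (2)$\Rightarrow$(4) directly and then recover isotonicity and laxity from the triplets $(1_\mathscr{V},u,v)$ and $(u,v,u\ast v)$; the paper instead builds two separate categories, with reverse entries $u\ast v$ and $u\ast u$, to get (2)$\Rightarrow$(3). The tentative two-point and ``$a(x,z)=a(x,y)=u$'' designs in your isotonicity bullet are unnecessary and should be replaced by the key lemma applied to $(1_\mathscr{V},u,v)$.
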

	
	\begin{proof}
		(1) $\Rightarrow$ (2) This is obvious.
		
		(2) $\Rightarrow$ (3)	Suppose that $F$ is preserving. Let $(X,b)$ be an arbitrary $\mathscr{V}$-category. By assumption $(X,F\circ b)$ is a $\mathscr{W}$-category so $1_\mathscr{W}=(F\circ b)(x,x)=F(1_\mathscr{V})$ for any $x\in X.$
		
		Let $u,v\in\mathscr{V}.$ Consider a set $X=\{x_1,x_2,x_3\}$ with three different elements and define $a:X\times X\to\mathscr{V}$ as
		$$a(x_i,x_j)=\begin{cases}
			1_\mathscr{V}&\text{ if }i=j,\\
			u&\text{ if }i=1, j=2,\\
			v&\text{ if }i=2, j=3 ,\\
			u\ast v&\text{ otherwise },
		\end{cases}$$
		for every $i,j\in\{1,2,3\}.$
		Then it is easy to check that  $(X,a)$ is a $\mathscr{V}$-category (in fact, it is a separated $\mathscr{V}$-category except when $u=v=1_\mathscr{V}$). By hypothesis, $(X,F\circ a)$ is a $\mathscr{W}$-category so
		\begin{align*}
			(F\circ a(x_1,x_2))\star (F\circ a(x_2,x_3))&\curlyeqprec F\circ a(x_1,x_3),\\
			F(u)\star F(v)&\curlyeqprec F (u\ast v).
		\end{align*}
		
		On the other hand, let $u,v\in \mathscr{V}$ such that $u\preceq v$ and $u\neq v.$ Again, let $X=\{x_1,x_2,x_3\}$ be a set with three different elements and define $c:X\times X\to\mathscr{V}$ as
		$$c(x_i,x_j)=\begin{cases}
			1_\mathscr{V}&\text{ if }i=j,\\
			u&\text{ if }i=1, j=2,\\
			1_\mathscr{V}&\text{ if }i=2, j=3 ,\\
			v&\text{ if }i=1, j=3 ,\\
			u\ast u&\text{ otherwise },
		\end{cases}$$
		for every $i,j\in\{1,2,3\}.$ 	
		It is straightforward to verify that $(X,c)$ is a $\mathscr{V}$-category (in fact, it is always a separated $\mathscr{V}$-category since $u\neq 1_\mathscr{V}$).  Since $(X,F\circ c)$ is a $\mathscr{W}$-category we deduce that
		\begin{align*}
			(F\circ c(x_1,x_2))\star (F\circ c(x_2,x_3))&\curlyeqprec F\circ c(x_1,x_3),\\
			F(u)\star F(1_\mathscr{V})=F(u)\star 1_\mathscr{W}=F(u)&\curlyeqprec F (v).
		\end{align*}
		
		Consequently, $F$ is isotone and it is a lax morphism of quantales.\medskip
		
		\noindent	(3) $\Rightarrow$ (4)
		Suppose that $(u,v,w)$ is an asymmetric triangle triplet in $(\mathscr{V},\preceq,\ast)$, that is, $u\ast v\preceq w.$ Since $F$ is a lax morphism of quantales then
		$$F(u)\star F(v)\curlyeqprec F(u\ast v)\curlyeqprec F(w)$$
		so $(F(u),F(v),F(w))$ is an asymmetric triangle triplet in $(\mathscr{W},\curlyeqprec,\star).$
		
		Furthermore, since $F$ is a lax morphism then $1_\mathscr{W}=F(1_{\mathscr{V}}).$

		\medskip

		\noindent	(4) $\Rightarrow$ (1)
		Let $(X,a)$ be a $\mathscr{V}$-category. Given $x,y,z\in X$ then $a(x,z)\ast a(z,y)\preceq a(x,y)$. Hence $(a(x,z),a(z,y),a(x,y))$ is an asymmetric triangle triplet in $(\mathscr{V},\preceq,\ast)$. By hypothesis $(F(a(x,z)),F(a(z,y)),F(a(x,y)))$ is an asymmetric triangle triplet on $(\mathscr{W},\curlyeqprec,\star)$, that is,
		$$(F\circ a(x,z))\star (F\circ a(z,y))\curlyeqprec F\circ a(x,z).$$
		Moreover, $1_\mathscr{W}= F(1_\mathscr{V})=F\circ a(x,x)$ for all $x\in X.$ Therefore, $(X,F\circ a)$ is a $\mathscr{W}$-category.
		
		On the other hand, if $f:(X,a)\to (Y,b)$ is a $\mathscr{V}$-functor then, for every $x,y\in X$ it holds $a(x,y)\preceq b(f(x),f(y))$ so $(a(x,y),1_\mathscr{V},b(f(x),b(f(y)))$ is an asymmetric triangle triplet in $(\mathscr{V},\preceq,\ast)$. Using that $F$ preserves asymmetric triangle triplets we deduce
		$$F(a(x,y))=F(a(x,y))\star 1_\mathscr{W}= F(a(x,y))\star F(1_\mathscr{V})\curlyeqprec F(b(f(x),f(y)))$$ so $\mathsf{F}(f)$ is a $\mathscr{W}$-functor. Consequently, $F$ is $\mathsf{Cat}$-preserving.
	\end{proof}
	
	\begin{remark}
		We have previously noted that, using the terminology from \cite{Lawv73}, a lax morphism of quantales is a specific type of closed functor between closed categories. Lawvere pointed out \cite[p. 149]{Lawv73} that these closed functors induce functors between the corresponding enriched categories, thereby demonstrating the implication (3) $\Rightarrow$ (2) (see also, for example, \cite{HofReis13}).
	\end{remark}

	If we particularize the previous result for the quantales $\mathsf{P}_+^I$ and $\mathsf{P}_+,$ we obtain the following result that characterizes extended quasi-pseudometric aggregation functions. We must emphasize that this result seems to have been forgotten in the literature about the aggregation of metric structures.
	
	\begin{theorem}\label{thm:eqpmaf}
		Let $F:\mathsf{P}_+^I\to\mathsf{P}_+.$ The following statements are equivalent:
		\begin{enumerate}[(1)]
			\item $F$ is an extended quasi-pseudometric aggregation function on products;
			\item $F$ is an extended quasi-pseudometric aggregation function on sets;
			\item $F$ is a lax morphism;
			\item $F$ preserves asymmetric triangle triplets and $F((0)_{i\in I})=0.$
		\end{enumerate}
	\end{theorem}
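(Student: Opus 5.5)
This theorem is a direct specialization of Theorem \ref{thm:plmq} combined with Lemma \ref{lem:preserving_eqpmaf}, so the plan is to chain these two results rather than argue from scratch.

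\emph{Step 1: check the hypotheses of Theorem \ref{thm:plmq}.} Both $\mathsf{P}_+$ and $\mathsf{P}_+^I$ must be commutative integral quantales.
\begin{itemize}
\item For $\mathsf{P}_+$ this is Example \ref{ex:quantales}(c). Its unit is $0$, which is the top element for $\leq^{\text{op}}$, and addition distributes over infima in the usual order, i.e.\ over suprema for $\leq^{\text{op}}$, with the convention $x+\infty=\infty$.
\item For $\mathsf{P}_+^I$ this follows from Remark \ref{rem:product_quantales}. Commutativity and integrality are inherited componentwise, and the unit is $(0)_{i\in I}$, which is the top element of the product order.
\end{itemize}

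\emph{Step 2: specialize Theorem \ref{thm:plmq} to $\mathscr{V}=\mathsf{P}_+^I$ and $\mathscr{W}=\mathsf{P}_+$.} The theorem gives that $F$ is preserving if and only if $F$ is a lax morphism, if and only if $F$ preserves asymmetric triangle triplets and $F(1_{\mathsf{P}_+^I})=1_{\mathsf{P}_+}$. Since $1_{\mathsf{P}_+^I}=(0)_{i\in I}$ and $1_{\mathsf{P}_+}=0$, the last condition reads $F((0)_{i\in I})=0$. This is exactly (4). So (3) and (4) are each equivalent to $F$ being preserving.

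\emph{Step 3: bring in (1) and (2).} By Lemma \ref{lem:preserving_eqpmaf}, $F$ is preserving if and only if it is an extended quasi-pseudometric aggregation function on products, if and only if it is one on sets. Combining this with Step 2 yields the equivalence of (1)--(4).

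\emph{Expected difficulty.} There is no real obstacle here. The only point needing care is unwinding the reversed order: for instance, the lax condition $F(u)+F(v)\leq^{\text{op}}F(u+v)$ means $F(u+v)\leq F(u)+F(v)$ in the usual order. The integrality check for the product quantale also deserves a brief remark, since Theorem \ref{thm:plmq} is stated only for commutative integral quantales.
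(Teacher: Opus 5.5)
Your proposal is correct and matches the paper's proof, which simply says the theorem is a direct consequence of Lemma~\ref{lem:preserving_eqpmaf} and Theorem~\ref{thm:plmq}. Your extra check that $\mathsf{P}_+$ and $\mathsf{P}_+^I$ are commutative integral quantales with units $0$ and $(0)_{i\in I}$ makes explicit what the paper leaves implicit.
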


	\begin{proof}
		It is a direct consequence of Lemma \ref{lem:preserving_eqpmaf} and Theorem \ref{thm:plmq}.    
	\end{proof}

	It is reasonable to wonder whether we can use our results for obtaining a characterization of functions aggregating quasi-pseudometrics instead of extended quasi-metrics. If $F:[0,+\infty)^I\to [0,+\infty)$ is a quasi-pseudometric aggregation function on products (or on sets) let us consider the function $\overline{F}:\mathsf{P}_+^I\to \mathsf{P}_+$ given by
	$$\overline{F}(x)=\begin{cases}
		F(x)&\text{ if }x_i\neq +\infty\text{ for all }i\in I,\\
		+\infty&\text{ otherwise },
	\end{cases}$$
	for every $x\in [0,+\infty]^I$ (see Remark \ref{rem:extended}). It is straightforward to check that:
	\begin{itemize}
		\item $\overline{F}$  is an extended quasi-pseudometric aggregation function on products (or on sets) if and only if $F$ is a quasi-pseudometric aggregation function on products (or on sets);
		\item $\overline{F}$  preserves asymmetric triangle triplets if and only if $F$ preserves asymmetric triangle triplets;
		\item $\overline{F}$ is a lax morphism if and only if $F$ is isotone, subadditive (that is, $F(x+y)\leq F(x)+F(y)$ for all $x,y\in [0,+\infty)^I$) and $F((0)_{i\in I})=0.$
	\end{itemize}
	
	In this way, we can obtain the following result. 
	
	\begin{theorem}\label{thm:qpmaf}
		Let $F:[0,+\infty)^I\to [0,+\infty).$ The following statements are equivalent:
		\begin{enumerate}[(1)]
			\item $F$ is a quasi-pseudometric aggregation function on products;
			\item $F$ is a quasi-pseudometric aggregation function on sets;
			\item $F$ is isotone, subadditive and $F((0)_{i\in I})=0;$
			\item $F$ preserves asymmetric triangle triplets and $F((0)_{i\in I})=0.$
		\end{enumerate}
	\end{theorem}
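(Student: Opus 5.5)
The plan is to transfer everything to the extended function $\overline{F}:\mathsf{P}_+^I\to\mathsf{P}_+$ of Remark \ref{rem:extended} and apply Theorem \ref{thm:eqpmaf}. That theorem already gives the equivalence of four statements for $\overline{F}$: it is an extended quasi-pseudometric aggregation function on products, on sets, a lax morphism, or it preserves asymmetric triangle triplets with $\overline{F}((0)_{i\in I})=0$. So it suffices to check three translations, one for each item in the bulleted list before the statement. Together with $\overline{F}((0)_i)=F((0)_i)$, the chain (1)$\Leftrightarrow$(2)$\Leftrightarrow$(3)$\Leftrightarrow$(4) for $F$ then follows at once.

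First translation: $F$ is a quasi-pseudometric aggregation function on products (resp. sets) if and only if $\overline{F}$ is an extended one.
\begin{itemize}
\item[$(\Leftarrow)$] A family of finite quasi-pseudometrics is in particular a family of extended ones. On finite inputs $\overline{F}=F$, so $F\circ d_\Pi$ is a quasi-pseudometric, and it is finite because $F$ takes finite values.
\item[$(\Rightarrow)$] Given extended quasi-pseudometrics $d_i$, I check the triangle inequality for $\overline{F}\circ d_\Pi$ at $x,y,z$. If $\overline{F}(d(x,y))<\infty$, then all $d_i(x,y)$ are finite. If the right-hand side is finite as well, then all $d_i(x,z),d_i(z,y)$ are finite. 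I then restrict to the three points (products of three-point pieces), where every $d_i$ is a finite quasi-pseudometric, and apply $F$. In every other case the right-hand side is $+\infty$ and there is nothing to prove. For sets, the restriction to $\{x,y,z\}$ works directly. If the finite case needs the three-point argument to be phrased more carefully, I would instead use that $F$ preserves asymmetric triplets, derived via Theorem \ref{thm:plmq}-style three-point constructions.
\end{itemize}

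Second translation: $\overline{F}$ preserves asymmetric triangle triplets if and only if $F$ does. In $\mathsf{P}_+$ the triplet condition $(x,y,z)$ reads $z\le x+y$. If some coordinate of $x$ or $y$ is infinite, the image triplet condition holds trivially, since $\overline{F}$ of that entry is $+\infty$. If $x,y$ are finite, then so is $z$, and $\overline{F}=F$ on all three entries. The converse direction is immediate.

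Third translation, which I expect to be the main obstacle since it needs care with infinite coordinates: $\overline{F}$ is a lax morphism if and only if $F$ is isotone, subadditive and $F((0)_i)=0$. The unit condition is $\overline{F}(0)=0$. For isotonicity with respect to $\le^{op}$, the requirement is that $x\le y$ implies $\overline{F}(x)\le\overline{F}(y)$. If $y$ has an infinite coordinate, the right-hand side is $+\infty$. If $y$ is finite, then so is $x$, and isotonicity of $F$ applies. The lax inequality $\overline{F}(x+y)\le\overline{F}(x)+\overline{F}(y)$ is trivial when some coordinate of $x$ or $y$ is infinite, because the right-hand side is then $+\infty$. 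Otherwise it is exactly subadditivity of $F$. The converse directions follow by restricting to finite points.
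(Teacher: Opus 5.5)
Your overall route is the paper's own: the paper proves this theorem by passing to $\overline{F}$, stating the same three equivalences as "straightforward", and invoking Theorem \ref{thm:eqpmaf}. Your second and third translations are correct as written.

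The step that fails is the $(\Rightarrow)$ half of your first translation. You reduce to the case where $d_i(x_i,z_i)$ and $d_i(z_i,y_i)$, and hence $d_i(x_i,y_i)$, are finite. You then restrict $d_i$ to $\{x_i,y_i,z_i\}$ (or to $\{x,y,z\}$ for sets) and assert that the result is a finite quasi-pseudometric. But finiteness of the three forward distances says nothing about the reverse ones $d_i(y_i,x_i)$, $d_i(z_i,x_i)$, $d_i(y_i,z_i)$. For example, $\{a,b\}$ with $d(a,b)=0$ and $d(b,a)=+\infty$ is an extended quasi-pseudometric space. Taking $x_i=z_i=a$ and $y_i=b$ makes all relevant distances $0$, yet the restriction still takes the value $+\infty$. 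So the hypothesis on $F$, which only concerns finite quasi-pseudometrics, cannot be applied to the restricted family.

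Your hedge is the correct repair and should become the main argument. Put $u_i=d_i(x_i,z_i)$, $v_i=d_i(z_i,y_i)$ and $w_i=d_i(x_i,y_i)$, so that $w_i\le u_i+v_i$. On a fresh set $\{t_1,t_2,t_3\}$ define $e_i$ as follows:
\begin{itemize}
\item $e_i(t_j,t_j)=0$;
\item $e_i(t_1,t_2)=u_i$, $e_i(t_2,t_3)=v_i$ and $e_i(t_1,t_3)=w_i$;
\item $e_i=u_i+v_i$ at $(t_2,t_1)$, $(t_3,t_2)$ and $(t_3,t_1)$.
\end{itemize}
Checking the six triangle inequalities shows that each $e_i$ is a finite quasi-pseudometric. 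All of them live on a common set, so this handles products and sets at once. Applying the hypothesis at the points $(t_1)_i,(t_2)_i,(t_3)_i$ gives $F(w)\le F(u)+F(v)$, which is exactly the missing case of the triangle inequality for $\overline{F}\circ d$.

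Alternatively, the two constructions in the proof of Theorem \ref{thm:plmq} have only finite entries when $u,v$ are finite. They give isotonicity and subadditivity of $F$, and hence $F(w)\le F(u+v)\le F(u)+F(v)$.

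You should also record that $\overline{F}((0)_i)=F((0)_i)=0$, which follows by applying the hypothesis to one-point spaces. With these amendments the proof is complete.
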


	For separately preserving functions, we can prove the following result.
	
	\begin{theorem}\label{thm:seplmq}
		
		Let $(\mathscr{V},\preceq,\ast),(\mathscr{W},\curlyeqprec,\star)$ be two commutative integral quantales. The following statements are equivalent:
		\begin{enumerate}[(1)]
			\item $F:(\mathscr{V},\preceq,\ast)\to(\mathscr{W},\curlyeqprec,\star)$ is separately $\mathsf{Cat}$-preserving; 
			\item  $F:(\mathscr{V},\preceq,\ast)\to(\mathscr{W},\curlyeqprec,\star)$ is separately preserving; 
			\item $F:(\mathscr{V},\preceq,\ast)\to(\mathscr{W},\curlyeqprec,\star)$ is a lax morphism satisfying $F^{-1}\left(1_{\mathscr{W}}\right)=\{1_{\mathscr{V}}\}.$
			\item $F$ preserves asymmetric triangle triplets and $F^{-1}\left(1_{\mathscr{W}}\right)=\{1_{\mathscr{V}}\}.$
		\end{enumerate}
		Therefore,
		$$\mathsf{se}\mathscr{P}(\mathscr{V},\mathscr{W})=\mathsf{se}\mathscr{C}(\mathscr{V},\mathscr{W}).$$
	\end{theorem}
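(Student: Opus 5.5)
We will prove the cycle (1) $\Rightarrow$ (2) $\Rightarrow$ (3) $\Rightarrow$ (4) $\Rightarrow$ (1), reusing Theorem \ref{thm:plmq} for everything that does not involve separation. The implication (1) $\Rightarrow$ (2) is immediate from the definitions. For (3) $\Leftrightarrow$ (4), the equivalence between being a lax morphism and preserving asymmetric triangle triplets with $F(1_\mathscr{V})=1_\mathscr{W}$ is exactly the proof of (3) $\Leftrightarrow$ (4) in Theorem \ref{thm:plmq}; that argument never uses categories. Moreover, $F^{-1}(1_\mathscr{W})=\{1_\mathscr{V}\}$ already contains $F(1_\mathscr{V})=1_\mathscr{W}$, and integrality gives $1_\mathscr{W}\curlyeqprec F(1_\mathscr{V})$ if and only if $F(1_\mathscr{V})=1_\mathscr{W}$. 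So (3) $\Leftrightarrow$ (4) holds.

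For (2) $\Rightarrow$ (3) we repeat the proof of (2) $\Rightarrow$ (3) in Theorem \ref{thm:plmq}. The only point to check is that the test categories used there are separated. Any singleton $\{x\}$ with $a(x,x)=1_\mathscr{V}$ is separated, which gives $F(1_\mathscr{V})=1_\mathscr{W}$. The category $c$ is separated whenever $u\prec v$ strictly, as noted there, so isotonicity follows. For the inequality $F(u)\star F(v)\curlyeqprec F(u\ast v)$, the category $a$ is separated unless $u=v=1_\mathscr{V}$, and in that case the inequality reads $1_\mathscr{W}\star1_\mathscr{W}\curlyeqprec 1_\mathscr{W}$, which is trivial. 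It remains to show $F^{-1}(1_\mathscr{W})=\{1_\mathscr{V}\}$.

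Suppose $F(u)=1_\mathscr{W}$ with $u\neq 1_\mathscr{V}$. Take $X=\{x_1,x_2\}$ with $a(x_i,x_i)=1_\mathscr{V}$ and $a(x_1,x_2)=a(x_2,x_1)=u$. This is a $\mathscr{V}$-category, since $u\ast u\preceq u\preceq 1_\mathscr{V}$ and $u\ast1_\mathscr{V}=u$, and it is separated because $u\neq1_\mathscr{V}$. Then $F\circ a$ takes the value $1_\mathscr{W}$ everywhere, so $(X,F\circ a)$ is not separated, a contradiction. This small construction is the only genuinely new step.

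For (4) $\Rightarrow$ (1), Theorem \ref{thm:plmq} ((4) $\Rightarrow$ (1)) shows that $(X,F\circ a)$ is a $\mathscr{W}$-category and that $\mathscr{V}$-functors are sent to $\mathscr{W}$-functors. We only need separation. If $(X,a)$ is separated and $1_\mathscr{W}\curlyeqprec F(a(x,y))$ and $1_\mathscr{W}\curlyeqprec F(a(y,x))$, then by integrality both values equal $1_\mathscr{W}$. The hypothesis $F^{-1}(1_\mathscr{W})=\{1_\mathscr{V}\}$ then gives $a(x,y)=a(y,x)=1_\mathscr{V}$, hence $x=y$. The restricted assignment is therefore a functor between the separated categories.

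The equality $\mathsf{se}\mathscr{P}=\mathsf{se}\mathscr{C}$ follows from (1) $\Leftrightarrow$ (2). I expect no serious obstacle; the only care needed is with the degenerate cases in the test categories.
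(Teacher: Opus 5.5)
Your proof is correct and follows the paper's own argument. It reaches $F^{-1}(1_\mathscr{W})=\{1_\mathscr{V}\}$ through the same two-point separated category $a(x_1,x_2)=a(x_2,x_1)=u$, checks separation in (4) $\Rightarrow$ (1) in the same way, and inherits everything else from Theorem \ref{thm:plmq}. You are more explicit than the paper, which only says the proof mimics that of Theorem \ref{thm:plmq}: you verify that the test categories used there are separated, namely the singleton, the category $c$ for strict $u\prec v$, and the category $a$ with the degenerate case $u=v=1_\mathscr{V}$ handled separately.
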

	
	\begin{proof}
		The proof of this result mimics that of Theorem \ref{thm:plmq} but we have to justify the addition of  $F^{-1}\left(1_{\mathscr{W}}\right)=\{1_{\mathscr{V}}\}$ in statements (3) and (4). Suppose that (2) holds. Assume, by way of contradiction, that there exists $v\in\mathscr{V}$ different from $1_\mathscr{V}$ such that $F(v)=1_\mathscr{W}.$ Consider $X=\{x,y\}$ a set with two different elements and define $a:X\times X\to\mathscr{V}$ as $a(x,y)=a(y,x)=v$ and $a(x,x)=a(y,y)=1_\mathscr{V}.$ Then $(X,a)$ is a separated $\mathscr{V}$-category. Nevertheless, $(X,F\circ a)$ is not separated since $1_\mathscr{W}=F(v)=F\circ a(x,y)=F\circ a(y,x)$ but $x\neq y.$ 
		
		(4) $\Rightarrow$ (1)  Given a separated $\mathscr{V}$-category $(X,a)$, following Theorem \ref{thm:plmq}'s proof, you obtain that $(X,F\circ a)$ is a $\mathscr{W}$-category. Let us check that it is separated. Let $x,y\in X$ such that $1_\mathscr{W}= F\circ a(x,y)=F\circ a(y,x).$ By assumption,  $a(x,y)=a(y,x)=1_\mathscr{V}$ so $x=y$ since $(X,a)$ is separated. 
	\end{proof}	
	
	Specifying the above result for the quantales $\mathsf{P}_+^I$ and $\mathsf{P}_+,$ we obtain the following result that characterizes extended quasi-metric aggregation functions. 
	
	\begin{theorem}\label{thm:eqmaf_spf}
		Let $F:\mathsf{P}_+^I\to\mathsf{P}_+.$ The following statements are equivalent:
		\begin{enumerate}[(1)]
			\item $F$ is an extended quasi-metric aggregation function on products;
			\item $F$ is a lax morphism and $F^{-1}(0)=\{(0)_{i\in I}\};$
			\item $F$ preserves asymmetric triangle triplets and $F^{-1}(0)=\{(0)_{i\in I}\};$
			\item $F$ is a separately preserving function.
		\end{enumerate}
		Therefore,
		$$\mathsf{se}\mathscr{P}(\mathsf{P}_+^I,\mathsf{P})=\mathsf{EQMAP}^I.$$
	\end{theorem}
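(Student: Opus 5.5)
By Theorem \ref{thm:seplmq} applied to $\mathscr{V}=\mathsf{P}_+^I$ and $\mathscr{W}=\mathsf{P}_+$, statements (2), (3) and (4) are already equivalent. Here $1_{\mathsf{P}_+^I}=(0)_{i\in I}$ and $1_{\mathsf{P}_+}=0$, so the condition $F^{-1}(1_\mathscr{W})=\{1_\mathscr{V}\}$ reads $F^{-1}(0)=\{(0)_{i\in I}\}$. Lemma \ref{lem:preserving_eqmaf} gives (4) $\Rightarrow$ (1). So the only new implication is (1) $\Rightarrow$ (2), and I plan to prove it directly from the definition of an extended quasi-metric aggregation function on products.

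The first part of (1) $\Rightarrow$ (2) is showing that $F$ is a lax morphism. The difficulty is that the test categories used in Theorem \ref{thm:plmq} live on a single set with a $\mathsf{P}_+^I$-valued structure, not on a product of quasi-metric spaces. I will instead build them as products. Given $u,v\in[0,+\infty]^I$, take for each $i$ the set $X_i=\{x_1,x_2,x_3\}$ with the extended quasi-metric $d_i$ given by
\[
d_i(x_1,x_2)=u_i,\quad d_i(x_2,x_3)=v_i,\quad d_i(x_1,x_3)=u_i+v_i,
\]
and all the other off-diagonal distances equal to $+\infty$. This is separated because the reverse distances are $+\infty$, and the triangle inequality is routine. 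Take the points $p=(x_1)_i$, $q=(x_2)_i$ and $r=(x_3)_i$ of the product. The triangle inequality for $F\circ d_\Pi$ then gives $F(u+v)\le F(u)+F(v)$. For isotonicity, suppose $u\ge^{op}$-below $v$, i.e.\ $u_i\ge v_i$ coordinatewise. Take each $X_i$ with points $x_1,x_2,x_3$ and set
\[
d_i(x_1,x_2)=v_i,\quad d_i(x_2,x_3)=0,\quad d_i(x_1,x_3)=u_i,
\]
with the remaining off-diagonal distances $+\infty$, except that the reverse distance between $x_2$ and $x_3$ is also $+\infty$ so that the space stays separated. The triangle inequality holds because $u_i\le v_i+0$ would be needed in the wrong direction, so I have to orient this correctly: I take $d_i(x_1,x_2)=u_i$, $d_i(x_2,x_3)=0$ and $d_i(x_1,x_3)=v_i$, where $v_i\le u_i$, with the other distances $+\infty$. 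From the triangle inequality and $F((0)_{i\in I})=0$ I get $F(v)\le F(u)+F(0)=F(u)$, which is exactly isotonicity in $\mathsf{P}_+$. The condition $F((0)_{i\in I})=0$ itself comes from $F\circ d_\Pi(x,x)=0$.

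The second part is showing $F^{-1}(0)=\{(0)_{i\in I}\}$, and I expect this to be the main obstacle. If $F(v)=0$ with $v\neq(0)_{i\in I}$, I want a separated product whose aggregate fails to be separated. I will take $X_i=\{a,b\}$ with $d_i(a,b)=d_i(b,a)=v_i$ whenever $v_i>0$. For indices with $v_i=0$, each factor would be non-separated, so there I use a one-point space instead. The product is then separated, because two distinct points must differ in some coordinate with $v_i>0$. Yet the two points differing exactly in those coordinates have $F$-distance $F(v)=0$ in both directions, which contradicts separation. This step, including the one-point trick, is the part I would check most carefully.
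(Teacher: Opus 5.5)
Your proof is correct. For (2) $\Leftrightarrow$ (3) $\Leftrightarrow$ (4) and for (4) $\Rightarrow$ (1) it matches the paper in substance. For (1) $\Rightarrow$ (2) it takes a different, self-contained route.

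\textbf{Where you differ.} The paper cites Theorem 9 of Massanet and Valero for every part of (2) except isotonicity. It proves isotonicity by hand, using three-point factors whose remaining distances are $b_i+b_i$. That filler collapses to $0$ when $b_i=0$, so the paper has to restrict the three-point factors to $J=\{i:b_i\neq 0\}$ and use one-point factors elsewhere. You instead derive $F((0)_{i\in I})=0$, subadditivity, isotonicity and $F^{-1}(0)=\{(0)_{i\in I}\}$ directly from product test spaces. Setting every reverse distance to $+\infty$ makes each three-point factor separated for arbitrary $u_i,v_i$, including $0$ and $+\infty$. So you need no index splitting in the subadditivity and isotonicity steps. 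You need it only in the zero-set step, where one-point factors replace the non-separated two-point spaces with $v_i=0$.

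\textbf{What each route buys.} Yours does not depend on an external result, which is worth having since the paper itself exposes an error in an example from that same source. The cost is a longer argument. For (2) $\Leftrightarrow$ (3) $\Leftrightarrow$ (4) you cite Theorem \ref{thm:seplmq} directly. The paper instead passes through Theorem \ref{thm:eqpmaf} and then adds the zero-set condition, which amounts to the same thing.

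\textbf{Clean-up.} In the isotonicity paragraph, delete the false start. State only the final orientation: $d_i(x_1,x_2)=u_i$, $d_i(x_2,x_3)=0$, $d_i(x_1,x_3)=v_i$ with $v_i\le u_i$, and all other off-diagonal distances $+\infty$.
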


	\begin{proof}
		(1) $\Rightarrow$ (2) By \cite[Theorem 9]{MassaVale13} we only need to show that $F$ is isotone. Let $a,b\in [0,+\infty]^I$ with $a\leq b.$ Let $J=\{i\in I: b_i\neq 0\}.$ For each $i\in I$ consider the set
		$$X_i=\begin{cases}
			\{t_1,t_2,t_3\}&\text{ if }i\in J,\\
			\{t_1\}&\text{ if }i\not\in J,
		\end{cases}$$
		where $t_1,t_2,t_3$ are different elements. For each $i\in I,$ define $d_i:X_i\times X_i\to [0,+\infty]$ as:
		\begin{itemize}
			\item if $i\in J$ then
			$$d_i(t_j,t_k)=\begin{cases}
				0&\text{ if }j=k,\\
				b_i&\text{ if }j=1,k=2,\\
				0&\text{ if }j=2,k=3,\\
				a_i&\text{ if }j=1,k=3,\\
				b_i+b_i&\text{ otherwise };\\
			\end{cases}$$
			\item if $i\not\in J$ then
			$$d_i(t_1,t_1)=0.$$
		\end{itemize}
		An easy computation shows that $d_i$ is an extended quasi-metric on $X_i$ for all $i\in I.$ By assumption $F\circ d_\Pi$ is an extended quasi-metric on $\prod_{i\in I}X_i.$
		
		Now, let us consider $x,y,z\in \prod_{i\in I} X_i$ given by 
		$$x_i=t_1,\hspace*{1cm}y_i=\begin{cases} t_3&\text{ if }i\in J,\\
			t_1&\text{ if }i\not\in J,\end{cases}\hspace*{1cm}z_i=\begin{cases} t_2&\text{ if }i\in J,\\
			t_1&\text{ if }i\not\in J,\end{cases}
		$$
		
		for all $i\in I.$ Therefore, by the triangle inequality,
		\begin{align*}
			(F\circ d_\Pi)(x,y)&\leq (F\circ d_\Pi)(x,z)+(F\circ d_\Pi)(z,y),\\
			F((d_i(x_i,y_i))_{i\in I})&\leq F((d_i(x_i,z_i))_{i\in I})+F((d_i(z_i,y_i))_{i\in I}),\\
			F(a)&\leq F(b)+F((0)_{i\in I})=F(b).
		\end{align*}
		Hence, $F$ is isotone.

		(2) $\Rightarrow$ (3) follows from Theorem \ref{thm:eqpmaf}. 
		
		(3) $\Rightarrow$ (4) $F$ is a preserving function by Theorem \ref{thm:eqpmaf}. Since $F^{-1}(0)=\{(0)_{i\in I}\},$ an easy verification shows that $F$ is also separately preserving.
		
		(4) $\Rightarrow$ (1) It is a consequence of Lemma \ref{lem:preserving_eqmaf}.
	\end{proof}

	We emphasize that the above result contradicts \cite[Example 3]{MassaVale13}, where the authors provide an example of an extended quasi-metric aggregation function on products that it is not isotone. Nevertheless, the example was not correct as we next show.
	
	\begin{example}[\mbox{\cite[Example 3]{MassaVale13}}]
		Let $F:[0,+\infty]^2\to [0,+\infty]$ given by
		$$F(x)=\begin{cases}
			+\infty&\text{ if }x_1>0,\\
			2&\text{ if }x_1=0,x_2\in\;]0,1[,\\
			1&\text{ if }x_1=0,x_2\geq 1,\\
			0&\text{ if }x_1=x_2=0,
		\end{cases}$$
		for all $x=(x_1,x_2)\in [0,+\infty]^2.$ In \cite[Example 3]{MassaVale13}, the authors assert that $F$ is an extended quasi-metric aggregation function. Nonetheless, this is not true. In fact, let $X=\{x_1,x_2,x_3\}$ be a set with three different elements and define $d_1,d_2:X\times X\to [0,+\infty)$ as
		$$d_1(x_i,x_j)=\begin{cases}
			0&\text{ if }i\leq j,\\
			1&\text{ if }i>j,
		\end{cases}\hspace*{1.5cm}
		d_2(x_i,x_j)=\begin{cases}
			0&\text{ if }i=j,\\
			1&\text{ if }i=1,j=2,\\
			0&\text{ if }i=2,j=3,\\
			\frac{1}{2}&\text{ if }i=1,j=3,\\
			2&\text{otherwise.}
		\end{cases}$$
		It is straightforward to check that $d_1,d_2$ are quasi-metrics on $X.$ But $F\circ (d_1,d_2)$ is not a quasi-metric on $X^2$ since it does not verify the triangle inequality. For example
		\begin{align*}
			(F\circ (d_1,d_2))&((x_1,x_1),(x_3,x_3))=F(d_1(x_1,x_3),d_2(x_1,x_3))=F(0,\tfrac{1}{2})=2\\
			&\not\leq (F\circ (d_1,d_2))((x_1,x_1),(x_2,x_2))+(F\circ (d_1,d_2))((x_2,x_2),(x_3,x_3))\\
			&=F(d_1(x_1,x_2),d_2(x_1,x_2))+F(d_1(x_2,x_3),d_2(x_2,x_3))\\
			&=F(0,1)+F(0,0)=1+0=0.
		\end{align*}
		Therefore, $F$ is not an extended quasi-metric aggregation function on products.
	\end{example}

	As above, we would like to obtain a characterization of functions aggregating quasi-metrics on products from the previous results. It was shown in \cite[Example 4]{MassaVale13} that there exist extended quasi-metric aggregation functions on products that are not quasi-metric aggregation functions on products. Nonetheless, as before, if $F:[0,+\infty)^I\to [0,+\infty)$ is a quasi-metric aggregation function on products then $\overline{F}$  is an extended quasi-metric aggregation function on products if and only if $F$ is a quasi-metric aggregation function on products.

	Consequently, we can obtain the following result first proved in \cite[Theorems 6 and 11]{MayorVale10}.
	
	\begin{theorem}[\cite{MayorVale10}]
		Let $F:[0,+\infty)^I\to [0,+\infty).$ The following statements are equivalent:
		\begin{enumerate}[(1)]
			\item $F$ is a quasi-metric aggregation function on products;
			\item $F$ is isotone, subadditive and $F^{-1}(0)=\{(0)_{i\in I}\};$
			\item $F$ preserves asymmetric triangle triplets and $F^{-1}(0)=\{(0)_{i\in I}\}.$           
		\end{enumerate}
	\end{theorem}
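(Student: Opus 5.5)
The plan is to transfer Theorem \ref{thm:eqmaf_spf} from the extended setting to finite values through the extension $\overline{F}:\mathsf{P}_+^I\to\mathsf{P}_+$ of Remark \ref{rem:extended}. That is, $\overline{F}(x)=F(x)$ when every coordinate of $x$ is finite, and $\overline{F}(x)=+\infty$ otherwise. The proof then reduces to three dictionary statements between $F$ and $\overline{F}$, which I would establish first and then chain together.

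\textbf{Dictionary statements.}
First, as asserted just before the statement, $F$ is a quasi-metric aggregation function on products if and only if $\overline{F}$ is an extended quasi-metric aggregation function on products.
\begin{itemize}
\item For the direction from $\overline{F}$ to $F$: if $\overline{F}$ aggregates extended quasi-metrics, restrict to finite-valued $d_i$. Then $\overline{F}\circ d_\Pi=F\circ d_\Pi$ is finite and is an extended quasi-metric, hence a quasi-metric.
\item For the direction from $F$ to $\overline{F}$: take extended quasi-metrics $d_i$. The axiom (VC1) and separation are immediate, since $\overline{F}$ agrees with $F$ on finite tuples, sends $(0)_i$ to $0$, and is nonzero whenever some coordinate is infinite. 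For the triangle inequality $\overline{F}(d_\Pi(x,y))\le \overline{F}(d_\Pi(x,z))+\overline{F}(d_\Pi(z,y))$, the right side is $+\infty$ unless all coordinates $d_i(x_i,z_i)$ and $d_i(z_i,y_i)$ are finite. In that case the $d_i(x_i,y_i)$ are finite too.
\item To reduce that finite case to $F$, replace each $d_i$ by the quasi-metric $\min(d_i,M_i)$ with $M_i=d_i(x_i,z_i)+d_i(z_i,y_i)+1$. This truncation is a quasi-metric, and it leaves the relevant three values unchanged. Applying $F$ to the truncated family gives the desired inequality.
\end{itemize}

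Second, $\overline{F}$ preserves asymmetric triangle triplets if and only if $F$ does. Any triplet involving $+\infty$ in the first two entries maps to a triplet with $+\infty$ in the sum, which is trivially satisfied. If the third entry has an infinite coordinate, then $u+v\ge w$ forces $u$ or $v$ to have one too. So only finite triplets matter, and there $\overline{F}=F$.

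Third, $\overline{F}$ is an isotone subadditive map with $\overline{F}^{-1}(0)=\{(0)_i\}$ exactly when $F$ is isotone, subadditive with $F^{-1}(0)=\{(0)_i\}$. This uses that in $\mathsf{P}_+$ the lax conditions read: isotone for the usual order, $\overline{F}(x+y)\le\overline{F}(x)+\overline{F}(y)$, and $\overline{F}(0)=0$. Infinite coordinates again only produce $+\infty$ on the larger side, so the check is routine.

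\textbf{Assembly.}
With the three statements in hand, apply Theorem \ref{thm:eqmaf_spf} to $\overline{F}$: its conditions (1), (2) and (3) are equivalent. Translating each through the corresponding dictionary statement gives (1), (2) and (3) of the present theorem, since $\overline{F}^{-1}(0)=F^{-1}(0)$.

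\textbf{Main obstacle.}
I expect the main difficulty to be the first dictionary statement, specifically the direction from $F$ to $\overline{F}$. An extended quasi-metric may take the value $+\infty$, so it is not directly admissible as input for $F$. The truncation trick above is what handles this, and I would check carefully that truncation by a constant preserves the triangle inequality and separation. It does, since $\min(d,M)$ with $M>0$ is again a quasi-metric.
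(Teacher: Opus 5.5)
Your proposal is correct and follows the paper's route. The paper derives this theorem from Theorem~\ref{thm:eqmaf_spf} by passing to $\overline{F}$ and using the same three correspondences, which it only calls straightforward; your truncation $\min(d_i,M_i)$ supplies the omitted detail. One small fix: separation of $\overline{F}\circ d_\Pi$ does not follow from the facts you list, but it follows from the same truncation (with $M_i>d_i(x_i,y_i)+d_i(y_i,x_i)$), or from $F^{-1}(0)=\{(0)_{i\in I}\}$, which a two-point space in each coordinate forces.
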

	
	We have previously observed (see Example \ref{ex:sets_no_product}) that extended quasi-metric aggregation functions on sets are different from extended quasi-metric aggregation functions on products. In fact, the same example allows to check that quasi-metric aggregation functions on sets are different from quasi-metric aggregation functions on products. A characterization of quasi-metric aggregation function on sets was obtained in \cite[Theorem 2.6]{MinyaVale19}. A similar characterization can be obtained in the extended case.\medskip

	Now, we center our attention on the symmetrically preserving functions.
	
	\begin{theorem}\label{thm:symp}
		Let $(\mathscr{V},\preceq,\ast),(\mathscr{W},\curlyeqprec,\star)$ be two commutative integral quantales. Then a function $F:(\mathscr{V},\preceq,\ast)\to(\mathscr{W},\curlyeqprec,\star)$ is symmetrically preserving if and only if $F$ preserves triangle triplets and $1_\mathscr{W}=F(1_\mathscr{V}).$
	\end{theorem}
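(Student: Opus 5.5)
The plan is to follow the scheme of the proof of Theorem \ref{thm:plmq}, with a symmetric three-point $\mathscr{V}$-category replacing the asymmetric one. Since $\ast$ and $\star$ are commutative, $(u,v,w)$ is a triangle triplet exactly when
$$u\ast v\preceq w,\qquad u\ast w\preceq v,\qquad v\ast w\preceq u.$$
The other three permutations give the same inequalities up to commutativity. The analogous reduction holds in $\mathscr{W}$.

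\emph{Necessity.} Suppose $F$ is symmetrically preserving.
\begin{enumerate}[(i)]
\item For the unit, take any one-point set $X=\{x\}$ with $a(x,x)=1_\mathscr{V}$, which is a symmetric $\mathscr{V}$-category. Condition (VC1) for $(X,F\circ a)$ gives $1_\mathscr{W}\curlyeqprec F(1_\mathscr{V})$. Integrality of $\mathscr{W}$ then forces $F(1_\mathscr{V})=1_\mathscr{W}$.
\item For triangle triplets, let $(u,v,w)$ be a triangle triplet in $\mathscr{V}$. Take $X=\{x_1,x_2,x_3\}$ with three distinct points and define a symmetric map $a$ by $a(x_i,x_i)=1_\mathscr{V}$, $a(x_1,x_2)=a(x_2,x_1)=u$, $a(x_2,x_3)=a(x_3,x_2)=v$ and $a(x_1,x_3)=a(x_3,x_1)=w$.
\item Check (VC2) for $a$. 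If two of the three points coincide, the inequality reads $1_\mathscr{V}\ast t\preceq t$ or $t\ast t\preceq 1_\mathscr{V}$, and both hold by integrality. If the points are distinct, the inequality is one of the three inequalities above, by symmetry of $a$ and commutativity of $\ast$. So $(X,a)$ is a symmetric $\mathscr{V}$-category.
\item By hypothesis $(X,F\circ a)$ is a $\mathscr{W}$-category. Reading (VC2) along the three routes $x_1\to x_2\to x_3$, $x_2\to x_1\to x_3$ and $x_1\to x_3\to x_2$ gives
$$F(u)\star F(v)\curlyeqprec F(w),\qquad F(u)\star F(w)\curlyeqprec F(v),\qquad F(v)\star F(w)\curlyeqprec F(u).$$
Hence $(F(u),F(v),F(w))$ is a triangle triplet.
\end{enumerate}

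\emph{Sufficiency.} Suppose $F$ preserves triangle triplets and $F(1_\mathscr{V})=1_\mathscr{W}$, and let $(X,a)$ be a symmetric $\mathscr{V}$-category.
\begin{enumerate}[(i)]
\item Symmetry of $F\circ a$ is immediate from symmetry of $a$.
\item (VC1) holds because $F(a(x,x))=F(1_\mathscr{V})=1_\mathscr{W}$; here $a(x,x)=1_\mathscr{V}$ by integrality.
\item For (VC2), fix $x,y,z\in X$. Using (VC2) for $a$ together with its symmetry, I obtain
$$a(x,z)\ast a(z,y)\preceq a(x,y),\qquad a(x,z)\ast a(x,y)=a(z,x)\ast a(x,y)\preceq a(z,y),$$
$$a(z,y)\ast a(x,y)=a(z,y)\ast a(y,x)\preceq a(z,x)=a(x,z).$$
So $(a(x,z),a(z,y),a(x,y))$ is a triangle triplet.
\item Preservation of triangle triplets then yields $F(a(x,z))\star F(a(z,y))\curlyeqprec F(a(x,y))$, which is (VC2) for $F\circ a$.
\end{enumerate}

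The only step needing real care is the verification in the necessity part that the three-point $a$ is a symmetric $\mathscr{V}$-category. The hypothesis that $(u,v,w)$ is a full triangle triplet, not merely an asymmetric one, is exactly what makes all triangle inequalities hold once $a$ is forced to be symmetric. Everything else is bookkeeping with commutativity and integrality.
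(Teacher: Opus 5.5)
Your proof is correct and follows the paper's argument. The unit condition comes from (VC1) on a symmetric $\mathscr{V}$-category, which you take to be a one-point space, together with integrality of $\mathscr{W}$. Necessity uses the symmetric three-point $\mathscr{V}$-category built from $u,v,w$, read along its routes. Sufficiency rests on the observation that $(a(x,z),a(z,y),a(x,y))$ is a full triangle triplet whenever $a$ is symmetric, as in (4) $\Rightarrow$ (1) of Theorem \ref{thm:plmq}. You also spell out two points the paper leaves implicit: why $1_\mathscr{W}\curlyeqprec F(1_\mathscr{V})$ upgrades to equality, and why symmetry of $a$ makes that triplet a full triangle triplet rather than only an asymmetric one.
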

	
	\begin{proof}
		Suppose that $F$ is symmetrically preserving. That $1_\mathscr{W}= F(1_\mathscr{V})$ follows as in the proof of (2) $\Rightarrow$ (3) of Theorem \ref{thm:plmq}.
		
		Let $(u,v,w)$ be a triangle triplet in  $(\mathscr{V},\preceq,\ast)$.  Consider a set $X=\{x_1,x_2,x_3\}$ with three different elements and define $a:X\times X\to\mathscr{V}$ as
		$$a(x_i,x_j)=a(x_j,x_i)=\begin{cases}
			1_\mathscr{V}&\text{ if }i=j,\\
			u&\text{ if }i=1, j=2,\\
			v&\text{ if }i=2, j=3 ,\\
			w&\text{ if }i=1, j=3,
		\end{cases}$$
		for every $i,j\in\{1,2,3\}.$
		An straightforward computation shows that $(X,a)$ is a symmetric $\mathscr{V}$-category. By hypothesis $(X,F\circ a)$ is a symmetric $\mathscr{W}$-category. Consequently,
		\begin{align*}
			F(a(x_1,x_2))\star F(a(x_2,x_3))&\curlyeqprec F(a(x_1,x_3)),\\
			F(u)\star F(v)&\curlyeqprec F(w).
		\end{align*}
		By permuting the elements $x_1,x_2,x_3$ we obtain that $(F(u),F(v),F(w))$ is a triangle triplet on $(\mathscr{W},\curlyeqprec,\star)$. So $F$ preserves triangle triplets.
		
		The converse is similar to the implication (4) $\Rightarrow$ (1) of Theorem \ref{thm:plmq}.
	\end{proof}
	
	By Remark \ref{rem:spreserving_epmaf} we obtain the following consequence of the previous theorem.
	
	\begin{corollary}\label{cor:epmaf}
		$F:[0,+\infty]^I\to[0,+\infty]$ is an extended pseudometric aggregation function on products (or on sets) if and only if $F$ preserves triangle triplets and $F((0)_{i\in I})=0.$ 
	\end{corollary}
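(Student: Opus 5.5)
The corollary is assembled from two earlier results. First, Remark \ref{rem:spreserving_epmaf} lets us replace \textquote{extended pseudometric aggregation function on products (or on sets)} by \textquote{symmetrically preserving function $\mathsf{P}_+^I\to\mathsf{P}_+$}. Then Theorem \ref{thm:symp} characterizes the latter. So the plan is to justify the remark carefully and then specialize the theorem to $\mathscr{V}=\mathsf{P}_+^I$, $\mathscr{W}=\mathsf{P}_+$.

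For the first step I will repeat the proof of Lemma \ref{lem:preserving_eqpmaf} with symmetry added, proving three implications.
\begin{itemize}
\item \emph{Symmetrically preserving implies products.} Given extended pseudometric spaces $(X_i,d_i)$, the pair $(\prod X_i,d_\Pi)$ is a symmetric $\mathsf{P}_+^I$-category, since $d_\Pi(x,y)=(d_i(x_i,y_i))_i=(d_i(y_i,x_i))_i=d_\Pi(y,x)$. Hence $F\circ d_\Pi$ is a symmetric $\mathsf{P}_+$-category, i.e.\ an extended pseudometric.
\item \emph{Products implies sets.} Restrict to the diagonal of $X^I$: the map $x\mapsto(x)_{i\in I}$ gives $d_\Pi((x)_i,(y)_i)=d_\Delta(x,y)$, so $F\circ d_\Delta$ inherits the pseudometric axioms.
\item \emph{Sets implies symmetrically preserving.} A symmetric $\mathsf{P}_+^I$-category $(X,a)$ has symmetric coordinate functions $a_i$, so each $a_i$ is an extended pseudometric. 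Moreover $a=(a_i)_\Delta$, so $F\circ a$ is an extended pseudometric, i.e.\ a symmetric $\mathsf{P}_+$-category.
\end{itemize}

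For the second step I apply Theorem \ref{thm:symp} to the commutative integral quantales $\mathsf{P}_+^I$ and $\mathsf{P}_+$ (Remark \ref{rem:product_quantales}). The units are $(0)_{i\in I}$ and $0$, because the order is $\leq^{op}$ and the operation is $+$. The condition $1_\mathscr{W}=F(1_\mathscr{V})$ therefore reads $F((0)_{i\in I})=0$. A triangle triplet in $\mathsf{P}_+^I$ is a triple $(x,y,z)$ such that for every permutation $(p,q,r)$ of it, $p+q\geq r$ componentwise in $[0,+\infty]^I$. In $\mathsf{P}_+$ this is the usual triangle triplet condition with extended reals. Thus \textquote{preserves triangle triplets} in the theorem means exactly what it should in the corollary.

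I expect no real obstacle. The only care points are the unit translation under the reversed order, and confirming that $\mathsf{P}_+^I$ and $\mathsf{P}_+$ are commutative and integral: the top element under $\leq^{op}$ is $0$, which is the unit of $+$. Both hypotheses of Theorem \ref{thm:symp} therefore hold.
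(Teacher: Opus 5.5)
Your proposal is correct and follows the paper's route exactly. The paper obtains the corollary by combining Remark \ref{rem:spreserving_epmaf} (the symmetric analogue of Lemma \ref{lem:preserving_eqpmaf}) with Theorem \ref{thm:symp} specialized to $\mathsf{P}_+^I$ and $\mathsf{P}_+$, where the unit condition becomes $F((0)_{i\in I})=0$; your explicit check of the three implications behind that remark, which the paper leaves to the reader, is sound.
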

	
	\begin{remark}
		We notice that the above result is very similar to the characterization obtained by Dobo\v{s} \cite{Dobos98} about the metric aggregation functions on products. The only difference relies in that in this case, $F$ vanishes only at $(0)_{i\in I}$. This was previously observed in \cite{PraTri02}, when the authors studied the pseudometric aggregation functions on products and on sets.
		
		Moreover, given a function $F:[0,+\infty)^I\to [0,+\infty)$ we have already commented that it has, in some sense, a similar behaviour to $\overline{F}.$ This has allowed us to deduce a characterization for functions aggregating quasi (pseudo)-metrics from the extended cases. This also occurs if $F$ is a pseudometric aggregation function on products or on sets, so Corollary \ref{cor:epmaf} also is true for pseudometric aggregation functions on products or on sets.
	\end{remark}
	
	For obtaining a characterization of the symmetrically $\mathsf{Cat}$-preserving functions, we have to add an extra ingredient to the properties satisfied by the symmetrically preserving functions.
	
	\begin{theorem}\label{thm:csymp}
		Let $(\mathscr{V},\preceq,\ast),(\mathscr{W},\curlyeqprec,\star)$ be two commutative unital quantales. Then a function $F:(\mathscr{V},\preceq,\ast)\to(\mathscr{W},\curlyeqprec,\star)$ is symmetrically $\mathsf{Cat}$-preserving if and only if  $F$ is isotone, preserves triangle triplets and $1_\mathscr{W}= F(1_\mathscr{V}).$
	\end{theorem}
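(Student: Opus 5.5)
We prove the two implications separately, reusing Theorem \ref{thm:symp} for the object part and adding an argument for the morphism part. Note that the statement says ``commutative unital'', but in this paper all quantales are assumed commutative and integral, and we use integrality (in particular $u\ast v\preceq u\wedge v$ and $1_\mathscr{V}$ being the top element).

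Suppose first that $F$ is symmetrically $\mathsf{Cat}$-preserving. Then in particular $F$ is symmetrically preserving, since the object part of $\mathsf{F}$ must send symmetric $\mathscr{V}$-categories to symmetric $\mathscr{W}$-categories. Theorem \ref{thm:symp} therefore gives that $F$ preserves triangle triplets and that $1_\mathscr{W}=F(1_\mathscr{V})$.

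It remains to show that $F$ is isotone, and this must come from the functor part. Let $u\preceq v$. Consider the symmetric $\mathscr{V}$-categories $(X,a)$ and $(Y,b)$ with $X=Y=\{x,y\}$, $x\neq y$, given by
$$a(x,y)=a(y,x)=u,\qquad b(x,y)=b(y,x)=v,$$
and with value $1_\mathscr{V}$ on the diagonal. Both are $\mathscr{V}$-categories. Condition (VC1) is immediate. For (VC2), every instance is either of the form $1_\mathscr{V}\ast w\preceq w$, or reduces to $u\ast u\preceq 1_\mathscr{V}$, which holds because $1_\mathscr{V}$ is the top element. The identity map $(X,a)\to(Y,b)$ is a $\mathscr{V}$-functor, because $u\preceq v$ and $1_\mathscr{V}\preceq 1_\mathscr{V}$. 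Since $\mathsf{F}$ is a functor on symmetric categories, the identity $(X,F\circ a)\to(Y,F\circ b)$ is a $\mathscr{W}$-functor. Evaluating at the pair $(x,y)$ gives $F(u)\curlyeqprec F(v)$, so $F$ is isotone.

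Conversely, suppose that $F$ is isotone, preserves triangle triplets and satisfies $1_\mathscr{W}=F(1_\mathscr{V})$. By Theorem \ref{thm:symp}, for every symmetric $\mathscr{V}$-category $(X,a)$ the pair $(X,F\circ a)$ is a symmetric $\mathscr{W}$-category, so $\mathsf{F}$ is well defined on objects. Now let $f:(X,a)\to(Y,b)$ be a $\mathscr{V}$-functor. For all $x,y\in X$ we have $a(x,y)\preceq b(f(x),f(y))$, and isotonicity of $F$ gives
$$F(a(x,y))\curlyeqprec F(b(f(x),f(y))).$$
Hence $\mathsf{F}(f)$ is a $\mathscr{W}$-functor. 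Identities and composition are preserved trivially, because $\mathsf{F}$ leaves the underlying maps unchanged. Thus $F$ is symmetrically $\mathsf{Cat}$-preserving.

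The main obstacle is the isotonicity step. The argument used in Theorem \ref{thm:plmq} does not carry over, because it relied on the asymmetric triplet $(u,1_\mathscr{V},v)$. Symmetric triangle triplets do not yield isotonicity, since the permutation $v\ast 1_\mathscr{V}\preceq u$ fails in general. For this reason isotonicity has to be obtained from the functoriality requirement, via the two-point construction above.
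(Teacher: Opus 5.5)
Your proof is correct and follows the paper's argument essentially verbatim. The two-point symmetric categories with off-diagonal values $u$ and $v$, together with the identity $\mathscr{V}$-functor, give isotonicity; Theorem~\ref{thm:symp} supplies triangle-triplet preservation and $F(1_\mathscr{V})=1_\mathscr{W}$; and isotonicity handles morphisms in the converse. Your explicit check of (VC2), which needs $u\ast u\preceq 1_\mathscr{V}$ and hence integrality rather than mere unitality, is a detail the paper leaves implicit.
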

	
	\begin{proof}
		Suppose that $F:(\mathscr{V},\preceq,\ast)\to(\mathscr{W},\curlyeqprec,\star)$ is symmetrically $\mathsf{Cat}$-preserving and let $u,v\in\mathscr{V}$ such that $u\preceq v.$ Let $X=\{x_1,x_2\}$ be a set with two different elements and define $a,b:X\times X\to \mathscr{V}$ as
		$$a(x_i,x_j)=\begin{cases}
			1_\mathscr{V}&\text{ if }i=j,\\
			u&\text{ if }i\neq j,\\
		\end{cases}\hspace*{2cm} b(x_i,x_j)=\begin{cases}
			1_\mathscr{V}&\text{ if }i=j,\\
			v&\text{ if }i\neq j,\\
		\end{cases}$$ 
		for all $i,j\in\{1,2\}.$ Then $(X,a), (X,b)$ are $\mathscr{V}$-categories. Let $f:(X,a)\to (Y,b)$ be the identity function. It is clear that $f$ is a $\mathscr{V}$-functor since $a(x_1,x_2)=u\preceq b(f(x_1),f(x_2))=b(x_1,x_2).$
		
		By assumption $\mathsf{F}(f): (X,F\circ a)\to (X,F\circ b)$ is a $\mathscr{W}$-functor so
		\begin{align*}
			(F\circ a)(x_1,x_2)&\preceq (F\circ b)(\mathsf{F}(f)(x_1),\mathsf{F}(f)(x_2))=(F\circ b)(x_1,x_2),\\
			F(u)&\preceq F(v).
		\end{align*}
		Hence $F$ is isotone.
		
		That $F$ preserves triangle triplets and $F(1_\mathscr{V})=1_\mathscr{W}$ is similar the proof of Theorem \ref{thm:symp}.
		
		Conversely, again by Theorem \ref{thm:symp}, we only have to prove that whenever $f:(X,a)\to (Y,b)$ is a $\mathscr{V}$-functor between two $\mathscr{V}$-categories then $f:(X,F\circ a)\to (Y,F\circ b)$ is a $\mathscr{W}$-functor. But this is obvious since $a(x,y)\preceq b(f(x),f(y))$ implies $F(a(x,y))\curlyeqprec F(b(f(x),f(y)))$ for all $x,y\in X$ since $F$ is isotone.
	\end{proof}
	
	From the previous results we can infer the following.
	
	\begin{theorem}\label{thm:psCp}
		$F:(\mathscr{V},\preceq,\ast)\to(\mathscr{W},\curlyeqprec,\star)$ is preserving if and only if it is symmetrically $\mathsf{Cat}$-preserving.
	\end{theorem}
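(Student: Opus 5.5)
The plan is to reduce both notions to algebraic conditions on $F$ using the two characterizations already proved, and then compare those conditions directly. By Theorem \ref{thm:plmq}, $F$ is preserving if and only if it is a lax morphism of quantales, equivalently if and only if $F$ preserves asymmetric triangle triplets and $1_\mathscr{W}=F(1_\mathscr{V})$. By Theorem \ref{thm:csymp}, $F$ is symmetrically $\mathsf{Cat}$-preserving if and only if $F$ is isotone, preserves triangle triplets and $1_\mathscr{W}=F(1_\mathscr{V})$. So it suffices to show that these two lists of conditions are equivalent. Throughout, the quantales are commutative and integral, as in the standing convention of the paper.

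For the forward direction, assume $F$ is preserving, so $F$ is a lax morphism. It is then isotone by definition, and $1_\mathscr{W}=F(1_\mathscr{V})$ by statement (4) of Theorem \ref{thm:plmq}. Let $(u,v,w)$ be a triangle triplet. Every permutation of it is an asymmetric triangle triplet. By (4) of Theorem \ref{thm:plmq}, $F$ sends each such permutation to an asymmetric triangle triplet. Hence every permutation of $(F(u),F(v),F(w))$ is an asymmetric triangle triplet, i.e. $(F(u),F(v),F(w))$ is a triangle triplet. Theorem \ref{thm:csymp} therefore gives that $F$ is symmetrically $\mathsf{Cat}$-preserving.

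For the converse, assume $F$ is isotone, preserves triangle triplets and satisfies $1_\mathscr{W}=F(1_\mathscr{V})$. Isotonicity and the unit condition are already in hand, so the only lax-morphism axiom left is $F(u)\star F(v)\curlyeqprec F(u\ast v)$. The key step is to show that $(u,v,u\ast v)$ is a triangle triplet. By commutativity of $\ast$, only the choice of the right-hand element matters, which leaves three inequalities:
\begin{itemize}
\item $u\ast v\preceq u\ast v$, which is trivial;
\item $u\ast(u\ast v)\preceq v$, which holds because $x\ast y\preceq y$ in an integral quantale;
\item $v\ast(u\ast v)\preceq u$, which holds for the same reason.
\end{itemize}
Since $F$ preserves triangle triplets, $(F(u),F(v),F(u\ast v))$ is a triangle triplet, and in particular $F(u)\star F(v)\curlyeqprec F(u\ast v)$. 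Thus $F$ is a lax morphism, and Theorem \ref{thm:plmq} shows it is preserving.

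The main obstacle is this choice of triplet in the converse: one needs a triplet that is symmetric enough to count as a triangle triplet while still encoding the asymmetric inequality for $u\ast v$. Integrality is exactly what makes $(u,v,u\ast v)$ work. A minor point is that Theorem \ref{thm:csymp} is stated for commutative unital quantales. Since every integral quantale is unital, it applies in the present setting.
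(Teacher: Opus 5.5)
Your proof is correct and follows essentially the same route as the paper: both directions reduce, via Theorems~\ref{thm:plmq} and~\ref{thm:csymp}, to comparing the two lists of algebraic conditions, and in the converse the triangle triplet $(u,v,u\ast v)$ does the key work. The differences are cosmetic. You read off isotonicity directly from the lax-morphism property rather than from the asymmetric triplet $(u,1_\mathscr{V},v)$, and you skip the paper's unneeded detour through Theorem~\ref{thm:symp}. You also check explicitly, using integrality, that $(u,v,u\ast v)$ is a triangle triplet, which the paper only asserts.
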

	
	\begin{proof}
		Suppose that $F$ is preserving. Since it is obvious that a function preserving asymmetric triangle triplets also preserves triangle triplets, we deduce from Theorems \ref{thm:plmq} and \ref{thm:symp} that $F$ is symmetrically preserving. Moreover, $F$ is also isotone since given $u,v\in \mathscr{V}$ with $u\preceq v$ then $(u,1_\mathscr{V},v)$ is an asymmetric triangle triplet so, by Theorem \ref{thm:plmq}, $(F(u),F(1_\mathscr{V}),F(v))=(F(u),1_\mathscr{W},F(v))$ is also an asymmetric triangle triplet. Hence $F(u)\curlyeqprec F(v).$ The conclusion follows from Theorem \ref{thm:csymp}.
		
		Conversely, suppose that $F$ is $\mathsf{Cat}$-preserving. By Theorem \ref{thm:csymp} we have that $F(1_\mathscr{V})=1_\mathscr{W},$ $F$ is isotone and preserves triangle triplets. Then if $(u,v,w)$ is an asymmetric triangle triplet then $u\ast v\preceq w$ so $F(u\ast v)\curlyeqprec F(w).$ Moreover, $(u,v,u\ast v)$ is a triangle triplet so $(F(u),F(v),F(u\ast v))$ also is. Hence $F(u)\star F(v)\curlyeqprec F(u\ast v).$ Consequently, $F(u)\star F(v)\curlyeqprec F(w),$ that is, $(F(u),F(v),F(w))$ is an asymmetric triangle triplet. The result follows from Theorem \ref{thm:plmq}.
	\end{proof}
	
	Therefore, we can fuse Theorems \ref{thm:plmq} and \ref{thm:psCp} to obtain the next result.
	
	\begin{corollary}\label{cor:preserving_spreserving}
		Let $(\mathscr{V},\preceq,\ast),(\mathscr{W},\curlyeqprec,\star)$ be two commutative integral quantales. The following statements are equivalent:
		\begin{enumerate}[(1)]
			\item $F:(\mathscr{V},\preceq,\ast)\to(\mathscr{W},\curlyeqprec,\star)$ is $\mathsf{Cat}$-preserving; 
			\item  $F:(\mathscr{V},\preceq,\ast)\to(\mathscr{W},\curlyeqprec,\star)$ is preserving; 
			\item $F:(\mathscr{V},\preceq,\ast)\to(\mathscr{W},\curlyeqprec,\star)$ is symmetrically $\mathsf{Cat}$-preserving; 
			\item $F:(\mathscr{V},\preceq,\ast)\to(\mathscr{W},\curlyeqprec,\star)$ is a lax morphism;
			\item $F$ preserves asymmetric triangle triplets and $1_\mathscr{W}=F(1_\mathscr{V}).$
			\item $F$ preserves symmetric triangle triplets, it is isotone and $1_\mathscr{W}=F(1_\mathscr{V}).$
		\end{enumerate}
		Therefore,
		$$\mathscr{P}(\mathscr{V},\mathscr{W})=\mathscr{C}(\mathscr{V},\mathscr{W})=\mathsf{sy}\mathscr{C}(\mathscr{V},\mathscr{W}).$$
	\end{corollary}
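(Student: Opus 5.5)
The equivalences (1) $\Leftrightarrow$ (2) $\Leftrightarrow$ (4) $\Leftrightarrow$ (5) are exactly Theorem \ref{thm:plmq}, and (2) $\Leftrightarrow$ (3) is Theorem \ref{thm:psCp}. The plan is therefore to assemble these and add condition (6) to the chain. I will prove (3) $\Leftrightarrow$ (6) using Theorem \ref{thm:csymp}, reading ``symmetric triangle triplets'' as the triangle triplets of the earlier definition, i.e. those all of whose permutations are asymmetric triangle triplets.

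One caveat concerns hypotheses. Theorem \ref{thm:csymp} is stated for commutative unital quantales, and Theorem \ref{thm:psCp} is stated without explicit hypotheses. Both rest on Theorem \ref{thm:symp}, which is stated for commutative integral quantales. Here we work in the integral setting, which is the paper's standing convention, so all of these results apply.

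The order of steps is:
\begin{itemize}
\item[(i)] Cite Theorem \ref{thm:plmq} for (1) $\Leftrightarrow$ (2) $\Leftrightarrow$ (4) $\Leftrightarrow$ (5).
\item[(ii)] Cite Theorem \ref{thm:psCp} for (2) $\Leftrightarrow$ (3).
\item[(iii)] Cite Theorem \ref{thm:csymp} for (3) $\Leftrightarrow$ (6). Its characterization of symmetrically $\mathsf{Cat}$-preserving maps is, word for word, condition (6): isotone, preserving triangle triplets, and $F(1_\mathscr{V})=1_\mathscr{W}$.
\end{itemize}
The final equalities of families then follow by definition of $\mathscr{P}$, $\mathscr{C}$ and $\mathsf{sy}\mathscr{C}$.

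For robustness I would also check directly that (5) $\Leftrightarrow$ (6), independent of the $\mathsf{Cat}$-level arguments. This repeats the computations in the proof of Theorem \ref{thm:psCp}.

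For (5) $\Rightarrow$ (6), a triangle triplet is in particular an asymmetric one, and so is each of its permutations, so $F$ preserves triangle triplets. For isotonicity, if $u\preceq v$ then $(u,1_\mathscr{V},v)$ is an asymmetric triangle triplet, so $F(u)=F(u)\star 1_\mathscr{W}\curlyeqprec F(v)$.

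For (6) $\Rightarrow$ (5), let $u\ast v\preceq w$. Integrality gives $u\ast v\preceq u\wedge v$, hence $u\ast(u\ast v)\preceq v$ and $v\ast(u\ast v)\preceq u$. Together with commutativity, this shows every permutation of $(u,v,u\ast v)$ is an asymmetric triangle triplet, so $(u,v,u\ast v)$ is a triangle triplet. Therefore
\[
F(u)\star F(v)\curlyeqprec F(u\ast v)\curlyeqprec F(w),
\]
where the second inequality uses isotonicity.

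The only delicate point I foresee is this verification that $(u,v,u\ast v)$ is a genuine triangle triplet, which needs integrality and commutativity; the rest is bookkeeping.
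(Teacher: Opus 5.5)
Your proposal is correct and follows the paper's proof, which likewise obtains the corollary by combining Theorems \ref{thm:plmq}, \ref{thm:psCp} and \ref{thm:csymp}. Your direct check of (5) $\Leftrightarrow$ (6) makes explicit two points the paper leaves implicit: that $(u,v,u\ast v)$ is a triangle triplet, which uses integrality and commutativity, and that Theorem \ref{thm:csymp} is really being applied in the integral setting.
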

	
	\begin{proof}
		It is a direct consequence of Theorems \ref{thm:plmq}, \ref{thm:csymp} and \ref{thm:psCp}.
	\end{proof}
	
	Notice that any of the above equivalent statements implies that $F:(\mathscr{V},\preceq,\ast)\to(\mathscr{W},\curlyeqprec,\star)$ is symmetrically preserving. However, we cannot add this assertion to the corollary as the next example shows (another example a\-ppears in \cite[Proposition 2.9]{Corazza99}).
	
	\begin{example}
		Let $\mathbf{3}=\{\bot,x,\top\}$ be a set with three different elements endowed with the partial order $\preceq$ determined by $\bot\preceq x\preceq \top.$ Furthermore, define the binary operation $\ast$ on $\mathbf{3}$ given by 
		$$u\ast v=\begin{cases}
			u\wedge v&\text{ if }u=\top\text{ or }v=\top,\\
			\bot&\text{ otherwise},
		\end{cases}$$
		for every $u,v\in\mathbf{3}.$ Then $(\mathbf{3},\preceq,\ast)$ is a commutative integral quantale. 
		
		Consider the function $F:(\mathbf{3},\preceq,\ast)\to (\mathbf{3},\preceq,\ast)$
		given by $F(\top)=\top$, $F(x)=\bot$ and $F(\bot)=x.$
		
		Suppose that $(u,v,w)$ is a triangle triplet. If all the elements of the triplet are different from $\top$ then $(F(u),F(v),F(w))$ is $(\bot,\bot,\bot)$, $(\bot,\bot,x)$, $(\bot,x,x)$, $(x,x,x)$  or a permutation of them, and all of them are triangle triplets. 
		
		If, for example, $u=\top$ then $u\ast v=v\preceq w$ and $u\ast w=w\preceq v$ so $v=w.$ Then it is obvious that  $(F(u),F(v),F(w))=(\top,F(v),F(v))$ is a triangle triplet. Hence $F$ preserves triangle triplets. By Theorem \ref{thm:symp}, $F$ is symmetrically preserving.
		
		Nevertheless, $F$ does not preserve asymmetric triangle triplets. Notice that $(\bot,\top,x)$ is an asymmetric triangle triplet but $(F(\bot),F(\top),F(x))=(x,\top,\bot)$ is not.
		
	\end{example}

	\begin{remark}\label{rem:diff}
		The above result sheds light on the already existent results about the difference between quasi-pseudometric aggregation functions on products (or on sets) (see Theorem \ref{thm:eqpmaf} and \ref{thm:qpmaf}) and pseudometric aggregation functions on products (see Corollary \ref{cor:epmaf}). It is clear that every quasi-pseudometric aggregation function on products is also a pseudometric aggregation function on products, although the converse is not true in general. The difference between these two family of functions is that, in some sense, quasi-pseudometric aggregation function on products preserve also some types of morphisms that are not preserved by pseudometric aggregation functions on products.
	\end{remark}

	To finish the paper, we study the particular case of the family $\mathscr{P}(\Delta^I_+(\ast),$ $\Delta_+(\ast))$ of preserving functions between the quantales $\Delta^I_+(\ast)$ and $\Delta_+(\ast)$, where $\ast$ is a t-norm. We saw in Example \ref{ex:vcat}.(3) that $\Delta_+(\ast)$-categories are fuzzy quasi-pseudometrics spaces with respect to the t-norm $\ast.$ The next example also shows that functions belonging to the family $\mathscr{P}(\Delta^I_+(\ast),\Delta_+(\ast))$ allow to aggregate fuzzy quasi-pseudometrics.
	
	\begin{remark}\label{rem:preserving-fqpmafp}
		Given a continuous t-norm $\ast$, let $F\in\mathscr{P}(\Delta_+^I(\ast), \Delta_+(\ast))$ be a preserving function. Let $\big\{(X_i,M_i,\ast):i\in I\big\}$ be a family of fuzzy quasi-pseudometric spaces. Then $\big\{(X_i,m_i):i\in I\big\}$ is a family of $\Delta_+(\ast)$-categories (see Example \ref{ex:vcat}.(3)). Therefore, by Example \ref{ex:prodqpd} and Remark \ref{rem:prodfqpd}, $(\prod_{i\in I} X_i,m_\Pi)$ is a $\mathsf{\Delta}_+^I(\ast)$-category where
		$$m_\Pi(x,y)=(m_i(x_i,y_i))_{i\in I}$$ for all $x,y\in \prod_{i\in I}X_i.$
		By assumption $(\prod_{i\in I} X_i, F\circ m_\Pi) $ is a $\Delta_+(\ast)$-category. Hence, by Example \ref{ex:vcat}.(3), $(M_{F\circ m_\Pi},\ast)$ is a fuzzy quasi-pseudometric on $\prod_{i\in I}X_i$, where
		\begin{align*}
			M_{F\circ m_\Pi}(x,y,t)&=(F\circ m_\Pi)(x,y)(t)=F((m_i(x,y))_{i\in I})(t)
		\end{align*}
		for every $x,y\in\prod_{i\in I}X_i$, $t\geq 0.$ 
	\end{remark}
	
	The above example naturally leads to the following definition.
	
	\begin{definition}\label{def:fqpmaf_quantale}
		A function $F:\Delta_+^I(\ast)\to \Delta_+(\ast)$ is said to be:
		\begin{itemize}
			\item a \textbf{$\ast$-fuzzy quasi-pseudometric aggregation function on products} if  whenever $\Big\{(X_i,M_i,\ast):i\in I\Big\}$ is a family of fuzzy quasi-pseudometric spaces then $(M_{F\circ m_\Pi},\ast)$ is a fuzzy quasi-pseudometric on $\prod_{i\in I}X_i$ where
			$$M_{F\circ m_\Pi}(x,y,t)=F((m_i(x_i,y_i))_{i\in I})(t)$$
			for all $x,y\in \prod_{i\in I}X_i$, $t\geq 0;$
			\item a \textbf{$\ast$-fuzzy quasi-pseudometric aggregation function on sets} if whenever  $\Big\{(M_i,\ast):i\in I\Big\}$ is a family of fuzzy quasi-pseudometrics on the same set $X$ then $(M_{F\circ m_\Delta},\ast)$ is a fuzzy quasi-pseudometric on $X$ where  
			$$M_{F\circ m_\Delta}(x,y,t)=F((m_i(x,y))_{i\in I})(t)$$
			for all $x,y\in X$, $t\geq 0.$
		\end{itemize}
	\end{definition}
	
	We next show that the family $\mathscr{P}(\Delta_+^I(\ast), \Delta_+(\ast))$ is exactly the family of $\ast$-fuzzy quasi-pseudometric aggregation function on products or on sets.

	\begin{theorem}\label{thm:fqpmaf}
		Let $\ast$ be a t-norm and let $F:\Delta_+^I(\ast)\to \Delta_+(\ast)$ be a map. The following statements are equivalent:
		\begin{enumerate}[(1)]
			\item $F$ is a fuzzy quasi-pseudometric aggregation function on products;
			\item $F$ is a fuzzy quasi-pseudometric aggregation function on sets;
			\item $F$ is preserving;
			\item $F$ is $\mathsf{Cat}$-preserving;
			\item $F$ is a lax morphism of quantales.
		\end{enumerate}
	\end{theorem}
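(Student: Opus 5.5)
The plan is to mirror the proof of Lemma \ref{lem:preserving_eqpmaf}, replacing $\mathsf{P}_+$ by $\Delta_+(\ast)$, and then invoke Theorem \ref{thm:plmq}. The equivalences (3) $\Leftrightarrow$ (4) $\Leftrightarrow$ (5) are exactly Theorem \ref{thm:plmq} applied to the commutative integral quantales $\Delta_+^I(\ast)$ and $\Delta_+(\ast)$. So the real work is the cycle (3) $\Rightarrow$ (1) $\Rightarrow$ (2) $\Rightarrow$ (3). This rests on the dictionary of Example \ref{ex:vcat}.(3): fuzzy quasi-pseudometrics $(M,\ast)$ correspond to $\Delta_+(\ast)$-categories via $M\mapsto m$ and $a\mapsto M_a$.

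For (3) $\Rightarrow$ (1), I take a family $\{(X_i,M_i,\ast):i\in I\}$ and follow Remark \ref{rem:preserving-fqpmafp}. Each $(X_i,m_i)$ is a $\Delta_+(\ast)$-category, so $(\prod_i X_i,m_\Pi)$ is a $\Delta_+^I(\ast)$-category. Preservation then makes $(\prod_iX_i,F\circ m_\Pi)$ a $\Delta_+(\ast)$-category, and hence $(M_{F\circ m_\Pi},\ast)$ is a fuzzy quasi-pseudometric. The step (1) $\Rightarrow$ (2) is the diagonal trick. Given fuzzy quasi-pseudometrics $M_i$ on one set $X$, take $X_i=X$ for all $i$ and restrict $F\circ m_\Pi$ to the diagonal $\{(x)_{i\in I}:x\in X\}\subseteq X^I$. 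The restriction of a fuzzy quasi-pseudometric to a subset is again one, and on the diagonal it equals $M_{F\circ m_\Delta}$.

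For (2) $\Rightarrow$ (3), let $(X,a)$ be a $\Delta_+^I(\ast)$-category and let $a_i$ be its coordinates. Each $(X,a_i)$ is a $\Delta_+(\ast)$-category, so $(M_{a_i},\ast)$ is a fuzzy quasi-pseudometric on $X$. Its associated $m_i$ is the $\Delta_+$-valued map built from $M_{a_i}$ restricted to $[0,+\infty)$ and extended to $+\infty$ by the supremum. By (2), $M_{F\circ m_\Delta}$ is a fuzzy quasi-pseudometric. Provided $m_\Delta=a$, its exponential mate extended to $+\infty$ gives back $F\circ a$, so $(X,F\circ a)$ is a $\Delta_+(\ast)$-category.

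The main obstacle is precisely the identification $m_i=a_i$, i.e.\ that the passage $a\mapsto M_a\mapsto m$ is the identity on $\Delta_+$-valued maps. This needs $a_i(x,y)(+\infty)=\bigvee_{t<\infty}a_i(x,y)(t)$, which is exactly left-continuity at $+\infty$ built into the definition of $\Delta_+$. Two further points are routine but need care. One is that (FM1)--(FM4) for $M_{F\circ a}$ on $[0,+\infty)$ together with the supremum extension at $+\infty$ really give (VC1)--(VC2) in $\Delta_+(\ast)$, including the value $t=+\infty$. The other is that the statement allows any t-norm, whereas Example \ref{ex:vcat}.(3) was phrased for continuous ones. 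I would check that this correspondence uses only left-continuity, since that is needed for $\Delta_+(\ast)$ to be a quantale.
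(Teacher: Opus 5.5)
Your proposal is correct and follows the paper's proof essentially step for step: Theorem \ref{thm:plmq} gives (3)$\Leftrightarrow$(4)$\Leftrightarrow$(5), Remark \ref{rem:preserving-fqpmafp} gives (3)$\Rightarrow$(1), specializing to a single set (your diagonal restriction) gives (1)$\Rightarrow$(2), and passing to the coordinate functions $a_i$ gives (2)$\Rightarrow$(3). The paper takes the identification $m_{a_i}=a_i$ for granted and uses left-continuity at $+\infty$ only to identify $m_{F\circ m_\Delta}(x,y)(+\infty)$ with $F(a(x,y))(+\infty)$; this is the same point you flag, along with the implicit need for $\ast$ to be left-continuous.
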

	
	\begin{proof}
		(1) $\Rightarrow$ (2) This is straightforward.
		
		(2) $\Rightarrow$ (3) Let $(X,a)$ be a $\Delta_+^I(\ast)$-category. For each $i\in I,$ let $a_i$ be the $i$th-coordinate function of $a$. Then $\Big\{(X,a_i):i\in I\Big\}$ is a family of $\Delta_+(\ast)$-categories. Consequently, $\Big\{(M_{a_i},\ast):i\in I\Big\}$ is a family of fuzzy quasi-pseudometrics on $X$ (see Example \ref{ex:vcat}). By assumption, $(M_{F\circ m_\Delta},\ast)$ is a fuzzy quasi-pseudometric on $X$ where  
		$$M_{F\circ m_\Delta}(x,y,t)=F((m_{a_i}(x,y))_{i\in I})(t)=F((a_i(x,y))_{i\in I})(t)$$
		for all $x,y\in X$, $t\geq 0.$ By Example \ref{ex:vcat}.(3), $(X,m_{F\circ m_\Delta})$ is a $\Delta_+(\ast)$-category and 
		$$m_{F\circ m_\Delta}(x,y)(t)=F(a(x,y))(t)=(F\circ a)(x,y)(t)$$
		for all $x,y\in X$, $t\in [0,+\infty].$ Notice that in the case $t=+\infty$ we have that
		\begin{align*}
			m_{F\circ m_\Delta}(x,y)(+\infty)&=\bigvee_{0\leq s} M_{F\circ m_\Pi}(x,y,s)=\bigvee_{0\leq s} F(a(x,y))(s)\\&=F(a(x,y))(+\infty)
		\end{align*}
		where in the last equality we have use that $F(a(x,y))\in \Delta_+(\ast).$ 
		
		Therefore, $F$ is preserving. 
		
		The equivalence between (3), (4) and (5) is a consequence of Theorem \ref{thm:plmq}. 
		
		(3) $\Rightarrow$ (1) has been shown in Remark \ref{rem:preserving-fqpmafp}.
	\end{proof}

	However, in \cite{PRLVale21}, we can find another method for aggregating fuzzy quasi-pseudometrics that is analogous to that of aggregating quasi-pseudometrics (see Definition \ref{def:eqpmaf} and Remark \ref{rem:qpmaf}). The difference relies on the fact that in \cite{PRLVale21}, the aggregation functions are from $[0,1]^I$ to $[0,1]$, meanwhile in Definition \ref{def:fqpmaf_quantale} we have considered functions between the quantales $\Delta_+^I(\ast)$ and $\Delta_+(\ast).$ We recall the definition given in \cite{PRLVale21}.

	\begin{definition}[\cite{PRLVale21}] \label{def:fqpmaf}
		A function $F:[0,1]^{I}\rightarrow [0,1]$ is said to be:
		\begin{itemize}
			\item a \textbf{fuzzy (quasi-)(pseudo)metric aggregation function on pro\-ducts} if whenever $\ast$ is a t-norm and $\Big\lbrace (X_{i}, M_{i}, \ast): i \in I\Big\rbrace$ is a family of fuzzy (quasi-)(pseudo)metric spaces then $(F \circ M_\Pi, \ast)$ is a fuzzy (quasi-)(pseudo)metric on $\Pi_{i \in I}X_{i}$, where $M_\Pi: (\Pi_{i \in I}X_{i})^{2} \times [0, +\infty) \rightarrow [0,1]^{I}$ is given by
			$$M_\Pi(x,y,t)=(M_{i}(x_{i}, y_{i}, t))_{i\in I}$$
			for every $x, y \in \Pi_{i \in I}X_{i}$ and $t \geq 0$.
			If $F$ only satisfies the above condition for a fixed t-norm $\ast$, then it is said to be an $\ast$-fuzzy \mbox{(quasi-)}(pseudo)metric aggregation function on products.
			\item a \textbf{fuzzy (quasi-)(pseudo)metric aggregation function on sets} if whenever $\ast$ is a t-norm and $\Big\lbrace (M_{i}, \ast) : i \in I \Big\rbrace $ is a family of fuzzy (quasi-)(pseudo)metrics on the same set $X$ then $(F \circ M_\Delta, \ast)$ is a fuzzy (quasi-)(pseudo)metric on $X$, where $M_\Delta: X^{2} \times [0,+\infty) \rightarrow [0,1]^{I}$ is given by 
			$$M_\Delta(x,y,t)=(M_{i}(x,y,t))_{i\in I}$$
			for every $x,y \in X$ and $t \geq 0$. If $F$ only satisfies the above condition for a fixed t-norm $\ast$, then it is said to be an $\ast$-fuzzy \mbox{(quasi-)}(pseudo)metric aggregation function on sets.
		\end{itemize}
	\end{definition}

	At this point, it is natural to wonder about the relationship between these two methods for aggregating fuzzy quasi-pseudometrics.
	We will see in Proposition \ref{prop::DeltaEmbedding} that the family of fuzzy quasi-pseudometric aggregation functions on sets from $[0,1]^I$ to $[0,1]$ can be considered as a subfamily of the preserving functions $\mathscr{P}(\Delta_+^I(\ast),\Delta_+(\ast)).$ We will use the following result.

	\begin{proposition}\label{prop:Fdelta}
		Let $\ast$ be a continuous t-norm and $F:([0,1]^I,\leq,\ast)\to([0,1],\leq,\ast)$ be a map. Define  $F_\Delta:\Delta_+^I(\ast)\to \Delta_+(\ast)$ 
		as 
		$$F_\Delta((f_i)_i)(t)=F((f_i(t))_i)$$ for every $(f_i)_i\in \Delta_+^I(\ast)$ and every $t\in [0,+\infty].$
		
		Then $F_\Delta$ is a lax morphism if and only if $F$ is a left-continuous lax morphism.
	\end{proposition}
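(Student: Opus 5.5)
The plan is to prove the two implications separately. In both directions, the first thing to settle is that $F_\Delta$ really takes values in $\Delta_+$: each $F_\Delta((f_i)_i)$ must be isotone and left-continuous in $t$. This is exactly where left-continuity of $F$ is needed. I read \emph{left-continuous} for $F$ as preservation of suprema of increasing (directed) families in $[0,1]^I$, including the empty family, so that in particular $F((0)_{i\in I})=0$.

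For ($\Leftarrow$), assume $F$ is a left-continuous lax morphism.
\begin{itemize}
\item Since each $f_i$ is isotone and $F$ is isotone, $t\mapsto F((f_i(t))_i)$ is isotone.
\item Since $f_i(t)=\bigvee_{s<t}f_i(s)$, and this supremum is taken over an increasing family in $s$, left-continuity of $F$ gives $F_\Delta((f_i)_i)(t)=\bigvee_{s<t}F_\Delta((f_i)_i)(s)$. At $t=0$ this reads $F(0)=0$. Hence $F_\Delta((f_i)_i)\in\Delta_+$.
\item $F_\Delta$ is isotone because $F$ is.
\item The unit of $\Delta_+^I(\ast)$ is $(f_{0,1})_i$, and its image is $0$ at $t=0$ and $F((1)_i)=1$ for $t>0$. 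So it equals $f_{0,1}$.
\item For the lax inequality, fix $r+s\le t$. Then $f_i(r)\ast g_i(s)\le (f_i\circledast g_i)(t)$ for every $i$. Using first the lax property of $F$ and then its isotony,
$$F((f_i(r))_i)\ast F((g_i(s))_i)\le F((f_i(r)\ast g_i(s))_i)\le F\big(((f_i\circledast g_i)(t))_i\big).$$
Taking the supremum over $r+s\le t$ gives $F_\Delta(f)\circledast F_\Delta(g)\le F_\Delta(f\circledast g)$.
\end{itemize}

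For ($\Rightarrow$), assume $F_\Delta$ is a (well-defined) lax morphism. For $x\in[0,1]^I$, let $\hat x\in\Delta_+^I$ be given by $\hat x_i(0)=0$ and $\hat x_i(t)=x_i$ for $t>0$. Then $F_\Delta(\hat x)(t)=F(x)$ for $t>0$.
\begin{itemize}
\item If $x\le y$ then $\hat x\le\hat y$, so isotony of $F_\Delta$ gives isotony of $F$.
\item $\widehat{(1)_i}$ is the unit, so $F((1)_i)=1$.
\item For $t>0$, choosing $r,s>0$ with $r+s=t$ shows $\hat x\circledast\hat y=\widehat{x\ast y}$, using monotonicity of $\ast$ for the other inequality. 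Evaluating the lax inequality of $F_\Delta$ at $t>0$ then yields $F(x)\ast F(y)\le F(x\ast y)$.
\item Since $F_\Delta(f)(0)$ must be $0$ for any $f$, we get $F(0)=0$.
\end{itemize}

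For left-continuity of $F$, take an increasing sequence $x^{k}$ with supremum $x$. Choose $0=t_0<t_1<t_2<\dots\uparrow 1$ and set $g_i(t)=x^{k}_i$ on $(t_{k-1},t_k]$ for $k\ge 1$, $g_i(t)=x_i$ for $t\ge1$, and $g_i(0)=0$. Each $g_i$ is isotone and left-continuous; the only delicate point is $t=1$, where $\bigvee_{s<1}g_i(s)=\bigvee_k x^k_i=x_i$. Left-continuity of $F_\Delta(g)$ at $1$ then gives $F(x)=\bigvee_kF(x^k)$.

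The main obstacle I expect is this last step when the index set $I$ is large. Increasing families in $[0,1]^I$ need not have countable cofinal chains, so the step-function construction only directly handles sequences. I would first try to show that left-continuity along sequences suffices for the way it is used in ($\Leftarrow$): there the relevant families are $\{(f_i(s))_i:s<t\}$, and since $f_i(t)=\bigvee_{s<t}f_i(s)$ with $s$ ranging over the reals below $t$, the sequence $(f_i(s_n))_i$ for any $s_n\uparrow t$ is cofinal. So I would define left-continuity of $F$ via increasing sequences, and the two directions would then match exactly.
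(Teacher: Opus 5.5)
Your proof is correct. Most of it follows the paper's proof. Your $\hat x$ are the paper's $f_\alpha$, and the arguments for well-definedness, isotony, the unit and the lax inequality are the same computations; the paper evaluates at $t=2$ with $r=s=1$, where you use an arbitrary $t>0$.

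The genuine difference is how you extract left-continuity of $F$ in ($\Rightarrow$). The paper feeds in the single radial family $h_i(t)=t_i\cdot t$ for $t\le 1$. This gives $F(x)=\bigvee_{r<1}F(rx)$, and the paper then concludes $F(x)=\bigvee_{s<x}F(s)$ by isotony. You instead encode an arbitrary increasing sequence into step functions and show that $F$ preserves its supremum.

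The paper's route is shorter and suffices when $I$ is finite. There, radial left-continuity plus isotony already gives preservation of directed suprema. That is what the paper's own ($\Leftarrow$) step $F((\bigvee_{s<t}f_i(s))_i)=\bigvee_{s<t}F((f_i(s))_i)$ uses.

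For infinite $I$ the radial property is not enough. Take $F=\inf$ on $[0,1]^{\mathbb{N}}$. It is an isotone lax morphism with $F((0)_i)=0$ and $F(x)=\bigvee_{r<1}F(rx)$. Now let $f_n$ be the indicator of $(1-\tfrac{1}{n},+\infty]$. Then $F_\Delta((f_n)_n)$ is the indicator of $[1,+\infty]$, which is not left-continuous at $1$. So the property the paper extracts in ($\Rightarrow$) is strictly weaker than what its ($\Leftarrow$) direction consumes.

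Your sequential notion avoids this. You must keep $F((0)_i)=0$ as part of the definition, as you already do. With that, it is exactly what ($\Leftarrow$) needs, because any sequence $s_n\uparrow t$ is cofinal in $\{s<t\}$. It is also exactly what your step functions produce. So your version of the equivalence holds for arbitrary $I$.

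For countable $I$, your notion agrees with preservation of directed suprema. To see this, pick countably many elements of the directed set approximating each coordinate, then chain them through upper bounds. Your restriction to sequences therefore only changes the notion when $I$ is uncountable.
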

	
	\begin{proof}
		Suppose that $F$ is a left-continuous lax morphism. We first check that $F_\Delta$ is well-defined, that is, $F_\Delta((f_i)_i)\in\Delta_+(\ast)$ for any $(f_i)_i\in \Delta_+^I(\ast).$ Given $t,s\in [0,+\infty]$ such that $t\leq s$ then $(f_i(t))_i\leq (f_i(s))_i$ since $f_i\in \Delta_+(\ast)$ for every $i\in I$. Hence, $F_\Delta((f_i)_i)(t)=F((f_i(t))_i)\leq F((f_i(s))_i)=F_\Delta((f_i)_i)(s)$ since $F$ is isotone. Therefore $F_\Delta((f_i)_i)$ is isotone. 
		
		Furthermore, for every $t\in[0,+\infty]$ and $i\in I,$ since $f_i\in\Delta_+(\ast)$ we have that $f_i(t)=\bigvee_{0\leq s<t} f_i(s).$ Since $F$ is left-continuous then 
		\begin{align*}
			F_\Delta((f_i)_i)(t)&=F((f_i(t))_i)=F\left(\left(\bigvee_{0\leq s<t} f_i(s)\right)_i\right)=\bigvee_{0\leq s<t} F((f_i(s))_i)\\&=\bigvee_{0\leq s<t} F_\Delta((f_i)_i)(s).\end{align*}
		Consequently, $F_\Delta((f_i)_i)\in\Delta_+(\ast).$

		Next, we check that $F_\Delta$ is a lax morphism of quantales. 	Let $(g_i)_i,(h_i)_i\in \Delta_+^I(\ast)$ such that $(g_i)_i\leq (h_i)_i.$ Since $F$ is isotone then $F_\Delta((g_i)_i)(t)=F((g_i(t))_i)\leq F((h_i(t))_i)=F_\Delta((h_i)_i)(t)$ for all 
		$t\in [0,+\infty]$ . Thus $F_\Delta$ is isotone.
		
		Let $g=(g_i)_i,h=(h_i)_i\in \Delta_+^I(\ast).$ Let $t\in [0,+\infty]$ and $r,s\in [0,+\infty]$ such that $r+s\leq t.$ Since $F$ is a lax morphism then $F(g(r))\ast F(h(s))\leq F(g(r)\ast h(s))$. Consequently
		\begin{align*}			
			(F_\Delta(g)\circledast F_\Delta(h))(t)&=\bigvee_{r+s\leq t} F_\Delta(g)(r)\ast F_\Delta(h)(s)=\bigvee_{r+s\leq t} F((g_i(r))_i)\ast F((h_i(s))_i)\\&\leq \bigvee_{r+s\leq t} F((g(r)_i\ast h_i(s))_i)
			\leq F\left(\left(\bigvee_{r+s\leq t} g_i(r)\ast h_i(s)\right)_i\right)\\&=F(((g_i\circledast h_i)(t))_i)\\
			&=F_\Delta\left(g\circledast h\right)(t),
		\end{align*}
		
		and since $t$ is arbitrary
		$$ F_\Delta(g)\circledast F_\Delta(h)\leq F_\Delta(g\circledast h).$$
		
		Finally, 
		$$F_\Delta((f_{0,1})_i)(t)=F((f_{0,1}(t))_i)=\begin{cases}
			F((1)_i)&\text{ if }t>0,\\
			F((0)_i)&\text{ if }t=0,
		\end{cases}=\begin{cases}
			1&\text{ if }t>0,\\
			0&\text{ if }t=0,
		\end{cases}=f_{0,1},$$
		where we have used that $F((1)_i)=1$ since $F$ is a lax morphism and $F((0)_i)=F(\bigvee \varnothing)=\bigvee F(\varnothing)=0$ since $F$ is left-continuous.
		
		Therefore $F_\Delta$ is a lax morphism.

		Conversely, suppose that $F_\Delta$ is a lax morphism. Given $\alpha\in [0,1]$ define $f_\alpha\in\Delta_+(\ast)$ as
		$$f_\alpha(t)=\begin{cases}\alpha&\text{ if }t>0,\\
			0&\text{ if }t=0.\end{cases}$$
		Let $(u_i)_i,(v_i)_i\in [0,1]^I$ such that $(u_i)_i\leq (v_i)_i$. Then $(f_{u_i})_i\leq (f_{v_i})_i.$ Since $F_\Delta$ is isotone then $F_\Delta((f_{u_i})_i)\leq F_\Delta((f_{v_i})_i)$ so \begin{align*}
			F_\Delta((f_{u_i})_i)(1)&\leq F_\Delta((f_{v_i})_i)(1),\\
			F((f_{u_i}(1))_i)&\leq	F((f_{v_i}(1))_i),\\
			F((u_i)_i)&\leq F((v_i)_i).
		\end{align*}
		Hence $F$ is isotone.
		
		Moreover, if $(u_i)_i,(v_i)_i\in [0,1]^I$ are arbitrary then 
		\begin{align*}
			(F_\Delta((f_{u_i})_i)\circledast F_\Delta((f_{v_i})_i))(2)&\leq F_\Delta((f_{u_i})_i\circledast(f_{v_i})_i )(2),\\
			\bigvee_{r+s\leq 2} F_\Delta((f_{u_i})_i)(r)\ast  F_\Delta((f_{v_i})_i)(s)&\leq F\left(\bigvee_{r'+s'\leq 2} (f_{u_i}(r'))_i\ast (f_{v_i}(s'))_i\right).
		\end{align*}
		Since $(f_{u_i}(t))_i=(u_i)_i$ and $(f_{v_i}(t))_i=(v_i)_i$ for every $t>0$, the above supremum is attained for any pair $0<r+s\leq 2$ so 
		\begin{align*}
			F_\Delta((f_{u_i})_i)(1)\ast  F_\Delta((f_{v_i})_i)(1)&\leq F((f_{u_i}(1))_i\ast (f_{v_i}(1))_i),\\
			F((u_i)_i)\ast F((v_i)_i)&\leq F((u_i)_i\ast (v_i)_i).
		\end{align*}
		Furthermore, since $F_\Delta$ is a lax morphism then $F_\Delta((f_{0,1})_i)=f_{0,1}.$ Hence $F((1)_i)=F((f_{0,1}(1))_i)=F_\Delta((f_{0,1})_i)(1)=f_{0,1}(1)=1$. Consequently, $F$ is a lax morphism.
		
		At last, let us check that $F$ is left-continuous. Let $(t_i)_i\in [0,1]^I.$ If $t_i=0$ for all $i\in I$, since $F_\Delta(f_0)\in\Delta_+(\ast)$ then $0=F_\Delta((f_0)_i)(0)=F((f_0(0))_i)=F((0)_i).$ Suppose now that there exists $j\in I$ with $t_j\neq 0.$ For each $i\in I$ define $h_i\in \Delta_+(\ast)$ as
		$$h_i(t)=\begin{cases}
			t_i&\text{ if }t>1,\\
			t_i\cdot t&\text{ if }0\leq t\leq 1.
		\end{cases}$$
		Since $F_\Delta((h_i)_i)\in\Delta_+(\ast)$ then $F_\Delta((h_i)_i)(t)=\bigvee_{0\leq s<t}F_\Delta((h_i)_i)(s)$ for every $t\in [0,+\infty].$ In particular,
		\begin{align*}
			F_\Delta((h_i)_i)(1)&=\bigvee_{0\leq s<1}F_\Delta((h_i)_i)(r)\\
			F((h_i(1))_i)&=\bigvee_{0\leq r<1}F((h_i(r))_i)\\
			F((t_i)_i)&=\bigvee_{0\leq r<1}F((t_i\cdot r)_i)
		\end{align*}
		Since $\{(t_i\cdot r)_i:r\in [0,1)\}\subseteq \{(s_i)_i\in [0,1]^I: (0)_i\leq (s_i)_i<(t_i)\}$ and $F$ is isotone then 
		$$F((t_i)_i)=\bigvee_{0\leq r<1}F((t_i\cdot r)_i)\leq\bigvee_{(s_i)_i<(t_i)_i}F((s_i)_i)\leq F((t_i)_i) $$ so $F$ is left-continuous.
	\end{proof}

	\begin{proposition}\label{prop::DeltaEmbedding}
		Let $\ast$ be a continuous t-norm and let $\mathsf{FQPMAFS}^I(\ast)$ be the family of functions $F:[0,1]^I\to [0,1]$ that are $\ast$-fuzzy quasi-pseudometric aggregation functions on sets. Then the map
		$$\begin{array}{ccc}
			
			\iota:\mathsf{FQPMAFS}^I(\ast)&\to&\mathscr{P}(\Delta_+^I(\ast),\Delta_+(\ast))\\[5pt]
			F&\to&F_\Delta
		\end{array}$$
		is an embedding.    
	\end{proposition}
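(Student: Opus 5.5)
To prove Proposition~\ref{prop::DeltaEmbedding}, I will check two things: that $\iota$ is well defined, i.e.\ $F_\Delta\in\mathscr{P}(\Delta_+^I(\ast),\Delta_+(\ast))$ for every $F\in\mathsf{FQPMAFS}^I(\ast)$, and that $\iota$ is injective. By Theorem~\ref{thm:plmq}, $\mathscr{P}(\Delta_+^I(\ast),\Delta_+(\ast))$ is exactly the set of lax morphisms. By Proposition~\ref{prop:Fdelta}, $F_\Delta$ is a lax morphism as soon as $F$ is a left-continuous lax morphism of $([0,1]^I,\leq,\ast)$ into $([0,1],\leq,\ast)$. So the core of the proof is the following claim: every $\ast$-fuzzy quasi-pseudometric aggregation function on sets is isotone, satisfies $F((1)_i)=1$ and $F(u)\ast F(v)\leq F(u\ast v)$, and is left-continuous.

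\emph{Unit.} Take any set $X$ and any family of fuzzy quasi-pseudometrics on it. Axiom (FM2) for $F\circ M_\Delta$ at $x=y$ and $t>0$ gives $F((1)_i)=1$.

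\emph{Lax inequality and isotony.} I will use three-point spaces, mimicking the proof of (2)$\Rightarrow$(3) in Theorem~\ref{thm:plmq}, but realised as fuzzy quasi-pseudometrics whose values are constant in $t>0$ and equal to $0$ at $t=0$. The point is that for such ``constant'' $M$, axiom (FM3) reduces to $a(x,z)\ast a(z,y)\leq a(x,y)$ with $a(x,y)=M(x,y,t)$ for $t>0$; the cases with $s=0$ or $t=0$ are trivial because $0\ast c=0$. So each $\mathscr{V}$-category with $\mathscr{V}=([0,1],\leq,\ast)$ gives a fuzzy quasi-pseudometric $M_a(x,y,t)=a(x,y)$ for $t>0$ and $M_a(x,y,0)=0$, which is left-continuous.

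Given $u,v\in[0,1]^I$, for each coordinate $i$ take the category $a_i$ from Theorem~\ref{thm:plmq} built from $u_i$, $v_i$ and $u_i\ast v_i$. Applying (FM3) to $F\circ M_\Delta$ at the points $x_1,x_2,x_3$ with $s=t=1$ gives $F(u)\ast F(v)\leq F(u\ast v)$. For isotony with $u\leq v$, use the category $c_i$, with $u_i$ on $(x_1,x_2)$, $1$ on $(x_2,x_3)$, and $v_i$ on $(x_1,x_3)$. This yields $F(u)\ast F((1)_i)\leq F(v)$, that is, $F(u)\leq F(v)$.

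\emph{Left-continuity.} This is the step I expect to be the main obstacle. I will take the coordinatewise metrics $M_i(x,y,t)=h_i(t)$ for $x\neq y$, with $h_i$ as in the proof of Proposition~\ref{prop:Fdelta}, on a two-point set. A non-constant $h_i$ must satisfy (FM3); if it fails for a general $\ast$, I will replace $h_i$ by a suitable isotone left-continuous choice such as $t\mapsto t_i\cdot\min(t,1)$ and compare with $\ast\geq$ the drastic product where needed. Then (FM4) for $F\circ M_\Delta$ gives $F((t_i)_i)=\bigvee_{r<1}F((r t_i)_i)$. Combining this with isotony, as in Proposition~\ref{prop:Fdelta}, gives left-continuity. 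Here $F((0)_i)=0$ comes from (FM1).

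\emph{Injectivity.} If $F_\Delta=G_\Delta$, evaluate both at the tuples $(f_{u_i})_i$ and at $t=1$. This gives $F(u)=G(u)$ for all $u$. Hence $\iota$ is injective, and therefore an embedding.
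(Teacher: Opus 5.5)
Your arguments for the unit, the lax inequality, isotony and injectivity are correct. They follow a genuinely different route from the paper, which simply cites \cite[Theorem 4.21]{PRLVale21} to get that $F$ is a left-continuous lax morphism. You instead turn the three-point categories from the proof of Theorem~\ref{thm:plmq} into fuzzy quasi-pseudometrics that are constant for $t>0$, which re-proves the needed half of that theorem by test spaces. Your injectivity check also supplies something the paper leaves implicit.

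The gap is in the left-continuity step. A minor point first: your worry about (FM3) is unfounded, and $t\mapsto t_i\min(t,1)$ is literally $h_i$. On a two-point set, (FM3) holds for every isotone profile vanishing at $0$, because each instance has one of two forms. Either there is a diagonal factor on the left, which is $1$ for a positive parameter and $0$ otherwise, so isotony of the profile finishes. Or the right-hand side is a diagonal value, which is $1$ unless both parameters vanish, in which case the left side is $0$.

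The real issue is what the radial test delivers. It gives only $F(c)=\bigvee_{r<1}F(rc)$ for $c\in[0,1]^I$, and the argument you borrow from the converse part of Proposition~\ref{prop:Fdelta} yields nothing more. The forward direction of Proposition~\ref{prop:Fdelta}, however, uses $F\bigl((\bigvee_{s<t}f_i(s))_i\bigr)=\bigvee_{s<t}F\bigl((f_i(s))_i\bigr)$ for every $(f_i)_i\in\Delta_+^I$. That identity is exactly what makes $F_\Delta((f_i)_i)$ an element of $\Delta_+$.

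For infinite $I$, the radial identity plus isotony does not imply it. Take $I=\{1,2,3,\dots\}$ and $F(u)=\inf_i u_i$. This is an isotone lax morphism with $F((0)_i)=0$ and $F(rc)=rF(c)$. Yet for $f_i(t)=1$ if $t>1-\tfrac{1}{i}$ and $f_i(t)=0$ otherwise, $\inf_i f_i(t)=1$ exactly when $t\geq 1$, which is not left-continuous at $1$. For finite $I$ the radial identity does suffice, but only after a step you do not give: with $c=(f_i(t))_i$ and $r<1$, pick $s_i<t$ with $f_i(s_i)>rc_i$ for the finitely many $i$ with $c_i>0$, and put $s=\max_i s_i<t$, so that $rc\leq (f_i(s))_i$.

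The clean repair is to test with arbitrary profiles instead of radial ones; this also makes ``left-continuity of $F$'' unnecessary as an intermediate notion.
\begin{itemize}
\item Given $(f_i)_i\in\Delta_+^I$, let $\varphi(t)=t/(1-t)$ for $0\leq t<1$ and $\varphi(t)=+\infty$ for $t\geq 1$.
\item On $X=\{x,y\}$ put $M_i(x,y,t)=M_i(y,x,t)=f_i(\varphi(t))$, with diagonal values $1$ for $t>0$ and $0$ at $t=0$.
\item Each $M_i$ is a fuzzy quasi-pseudometric: (FM1) holds since $f_i(0)=0$, (FM3) by the two-point remark above, and left-continuity at $t=1$ is left-continuity of $f_i$ at $+\infty$.
\item Then (FM4) for $F\circ M_\Delta$ at $(x,y)$ says precisely that $F_\Delta((f_i)_i)$ is left-continuous on all of $[0,+\infty]$. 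Together with isotony of $F$, this gives $F_\Delta((f_i)_i)\in\Delta_+$.
\end{itemize}
The remaining computations in the forward direction of Proposition~\ref{prop:Fdelta} are isotony of $F_\Delta$, $F_\Delta(g)\circledast F_\Delta(h)\leq F_\Delta(g\circledast h)$, and $F_\Delta((f_{0,1})_i)=f_{0,1}$. These use only isotony, $F(u)\ast F(v)\leq F(u\ast v)$, $F((1)_i)=1$ and $F((0)_i)=0$, all of which you have already established. Theorem~\ref{thm:plmq} then finishes.
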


	\begin{proof}
		If $F\in \mathsf{FQPMAFS}(\ast)$, by Theorem \cite[Theorem 4.21]{PRLVale21}, $F:([0,1]^I,\leq,\ast)\to ([0,1],\leq,\ast)$ is a lax morphism and left-continuous. By Proposition \ref{prop:Fdelta} $F_\Delta$ is a lax morphism so a preserving function by Theorem \ref{thm:plmq}. 
	\end{proof} 
	
	\begin{remark}
		Notice that the above result is also valid for the family of $\ast$-fuzzy (quasi-)pseudometric aggregation function on products since it coincides with $\mathsf{FQPMAFS}^I(\ast)$ (see \cite[Theorem 4.21]{PRLVale21})
	\end{remark}

	However, not every preserving function  $F:\Delta_+^I(\ast)\to \Delta_+(\ast)$ can be expressed as $G_\Delta$ for some $G:[0,1]^I\to [0,1]$ as the next example shows.
	
	\begin{example}
		Let us consider the function $F:(\Delta_+,\leq,\cdot)\to (\Delta_+,\leq,\cdot)$ given by
		$$F(f)=\begin{cases}
			f_0&\text{ if }f(\tfrac{1}{2})>0,\\
			f_{\tfrac{1}{2}}&\text{ if }f(\tfrac{1}{2})=0,
		\end{cases}$$
		where
		$$f_\alpha(t)=\begin{cases}
			1&\text{ if }t>\alpha,\\0&\text{ if }0\leq t\leq \alpha.\end{cases}
		$$
		for every $\alpha\in [0,1]$. It is easy to check that $F$ is preserving. In case we could find $G:[0,1]\to [0,1]$ such that $G_\Delta=F$ then 
		$$G_\Delta(f_{\tfrac{1}{2}})(\tfrac{1}{3})=G(f_{\tfrac{1}{2}}(\tfrac{1}{3}))=G(0)=F(f_{\tfrac{1}{2}})(\tfrac{1}{3})=f_{\tfrac{1}{2}}(\tfrac{1}{3})=0,$$
		$$G_\Delta(f_{\tfrac{1}{3}})(\tfrac{1}{3})=G(f_{\tfrac{1}{3}}(\tfrac{1}{3}))=G(0)=F(f_{\tfrac{1}{3}})(\tfrac{1}{3})=f_{0}(\tfrac{1}{3})=1.$$
		This gives two different values for $G(0)$ which is not possible.
		
	\end{example}


	Notice that $F:[0,1]^I\to [0,1]$ is a fuzzy quasi-pseudometric aggregation function on products (or on sets) if and only if $F_\Delta:\Delta_+^I(\ast)\to \Delta_+(\ast)$ is a fuzzy quasi-pseudometric aggregation function (or on sets). This only requires that Definition \ref{def:fqpmaf_quantale} reduces to Definition \ref{def:fqpmaf} for $F_\Delta.$  Then, using Theorem \ref{thm:fqpmaf} and Proposition \ref{prop:Fdelta} we deduce from our results the following characterization obtained in \cite{PRLVale21}.
	
	\begin{theorem}[\mbox{compare \cite[Theorem 4.21]{PRLVale21}}]
		Let $\ast$ be a t-norm and let $F:([0,1]^I,\leq,\ast)\to([0,1],\leq,\ast)$ be a map. The following statements are equivalent:
		\begin{enumerate}[(1)]
			\item $F$ is a fuzzy quasi-pseudometric aggregation function on products;
			\item $F$ is a fuzzy quasi-pseudometric aggregation function on sets;
			\item $F$ is a left-continuous lax morphism of quantales.
		\end{enumerate}
	\end{theorem}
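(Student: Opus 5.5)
The plan is to route everything through the map $F\mapsto F_\Delta$ of Proposition \ref{prop:Fdelta} and the equivalences of Theorem \ref{thm:fqpmaf}. The bridge is the observation just made before the statement: for $F:[0,1]^I\to[0,1]$, $F$ is a ($\ast$-)fuzzy quasi-pseudometric aggregation function on products (resp. on sets) in the sense of Definition \ref{def:fqpmaf} if and only if $F_\Delta$ is one in the sense of Definition \ref{def:fqpmaf_quantale}. So I first make this precise. Given fuzzy quasi-pseudometrics $(M_i,\ast)$ with associated $\Delta_+(\ast)$-categories $m_i$, I compute
$$M_{F_\Delta\circ m_\Pi}(x,y,t)=F_\Delta((m_i(x_i,y_i))_i)(t)=F((M_i(x_i,y_i,t))_i)=(F\circ M_\Pi)(x,y,t)$$
for $t<+\infty$, and similarly for $m_\Delta$. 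This uses only the pointwise definition of $F_\Delta$ and of $m$. There is a subtlety: $F_\Delta$ must actually land in $\Delta_+(\ast)$ for Definition \ref{def:fqpmaf_quantale} to make sense. Hence the bridge is only literally available once left-continuity and isotonicity of $F$ are known.

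With the bridge in place, I argue in a cycle. (1) $\Rightarrow$ (2) is immediate, by specialising to a product of copies of $X$ restricted to the diagonal, exactly as in Lemma \ref{lem:preserving_eqpmaf}.

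(3) $\Rightarrow$ (1): if $F$ is a left-continuous lax morphism, Proposition \ref{prop:Fdelta} gives that $F_\Delta$ is a well-defined lax morphism. Theorem \ref{thm:fqpmaf} then gives that $F_\Delta$ is a fuzzy quasi-pseudometric aggregation function on products. By the bridge computation, $F\circ M_\Pi$ coincides with $M_{F_\Delta\circ m_\Pi}$ on $t<+\infty$, hence is a fuzzy quasi-pseudometric. Since the t-norm is arbitrary in the non-fixed version, I apply this for each continuous $\ast$. I will have to note that the characterization is stated for the fixed t-norm governing the quantale structures; this is harmless.

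(2) $\Rightarrow$ (3) is the main obstacle, since a priori $F_\Delta$ need not map into $\Delta_+(\ast)$, so Theorem \ref{thm:fqpmaf} cannot be invoked directly. I would prove the needed properties of $F$ by hand with small test spaces, mimicking the constructions in Proposition \ref{prop:Fdelta} and Theorem \ref{thm:plmq}.

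\emph{Unit.} Using the constant fuzzy metric $M(x,x,t)=1$ for $t>0$ gives $F((1)_i)=1$.

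\emph{Left-continuity and $F((0)_i)=0$.} Take a two-point set with $M_i(x,y,t)=h_i(t)$, where $h_i$ is the ramp $t\mapsto t_i\cdot\min(t,1)$ from the proof of Proposition \ref{prop:Fdelta}. Then (FM1) and (FM4) for $F\circ M_\Delta$ force $F((0)_i)=0$ and $F((t_i)_i)=\bigvee_{r<1}F((t_ir)_i)$.

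\emph{Isotonicity and the lax inequality.} Take three-point sets with the values $u,1,v$ and $u,v,u\ast v$ placed on edges at times $t>0$ via the step functions $f_\alpha$. (FM3) for $F\circ M_\Delta$ then yields $F(u)\le F(v)$ when $u\le v$, and $F(u)\ast F(v)\le F(u\ast v)$.

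The delicate points are checking that these test objects really are fuzzy quasi-pseudometrics for an arbitrary t-norm $\ast$, and deducing full left-continuity from the ramp identity using isotonicity. The latter I would do exactly as at the end of Proposition \ref{prop:Fdelta}. Alternatively, once isotonicity and left-continuity are established, $F_\Delta$ is well-defined, and the bridge plus Theorem \ref{thm:fqpmaf} and Proposition \ref{prop:Fdelta} finish (2) $\Rightarrow$ (3) cleanly.
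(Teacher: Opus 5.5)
For (1) $\Rightarrow$ (2) and (3) $\Rightarrow$ (1) you follow the paper: the identity $M_{F_\Delta\circ m_\Pi}(x,y,t)=(F\circ M_\Pi)(x,y,t)$ for $t<+\infty$, together with Proposition~\ref{prop:Fdelta} and Theorem~\ref{thm:fqpmaf}. The real difference is in (2) $\Rightarrow$ (3). The paper uses the same bridge in the other direction. It asserts that $F$ aggregates iff $F_\Delta$ does, without checking that $F_\Delta$ maps $\Delta_+^I(\ast)$ into $\Delta_+(\ast)$. It then applies Theorem~\ref{thm:fqpmaf} and the converse half of Proposition~\ref{prop:Fdelta}. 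Your unit, ramp and three-point test spaces are what that chain unwinds to, evaluated directly on $[0,1]^I$: the categories of Theorem~\ref{thm:plmq} transported along $\alpha\mapsto f_\alpha$ (legitimate because $f_\alpha\circledast f_\beta=f_{\alpha\ast\beta}$), and the ramps $h_i$. You lose the paper's brevity. In exchange, $F_\Delta$ is never needed in this direction, and the argument works for any t-norm, since all your test objects satisfy (FM1)--(FM4) for arbitrary $\ast$. You were right to flag the point the paper glosses over.

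Two repairs are needed. First, drop the remark about applying the argument ``for each continuous $\ast$''. Statement (3) concerns only the given $\ast$, and a left-continuous lax morphism for one t-norm need not be one for another. Take $|I|=1$ and $F(u)=1$ for $u>\tfrac12$, $F(u)=0$ otherwise; this is a left-continuous lax morphism for the minimum. For the product t-norm, however, consider the symmetric three-point space with $M(x_1,x_2,t)=M(x_2,x_3,t)=0.6$ and $M(x_1,x_3,t)=0.36$ for $t>0$. Its image under $F$ violates (FM3), since $F(0.6)\cdot F(0.6)=1>0=F(0.36)$. So (1) and (2) must be read as $\ast$-aggregation functions for the fixed $\ast$ of (3), as in Definition~\ref{def:fqpmaf_quantale}; under the all-t-norms reading of Definition~\ref{def:fqpmaf}, (3) $\Rightarrow$ (1) is false. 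Also, Proposition~\ref{prop:Fdelta} and Remark~\ref{rem:preserving-fqpmafp} assume $\ast$ continuous. Checking (FM1)--(FM4) for $F\circ M_\Pi$ directly from (3) avoids this restriction.

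Second, for the cycle to close, the left-continuity you extract must be what the forward half of Proposition~\ref{prop:Fdelta} actually uses: $F\bigl((\bigvee_{s<t}f_i(s))_i\bigr)=\bigvee_{s<t}F((f_i(s))_i)$ for all $(f_i)_i\in\Delta_+^I(\ast)$. Passing to $F(u)=\bigvee_{v<u}F(v)$, as at the end of that proposition, loses too much. For $I=\{1,2\}$, the map with $F(u)=1$ iff $u_1=1$ and $u_2>0$ (and $0$ otherwise) satisfies it, yet violates your ramp identity at $(1,\tfrac12)$. Even the ramp identity $F(u)=\bigvee_{r<1}F(ru)$, combined with isotonicity, suffices only for finite $I$. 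For $I=\mathbb{N}$, $F(u)=\inf_i u_i$ satisfies it and is lax for every t-norm. Yet with $f_i(t)=1$ for $t>1-\tfrac1i$ and $f_i(t)=0$ otherwise, $t\mapsto\inf_i f_i(t)$ jumps from $0$ to $1$ at $t=1$. Your method gives the right property directly: use the two-point space with $M_i(x,y,\cdot)=M_i(y,x,\cdot)=f_i$ for arbitrary $(f_i)_i$, reparametrizing $[0,+\infty]$ onto $[0,1]$ to reach $t=+\infty$. This is also what genuinely shows that $F_\Delta$ lands in $\Delta_+(\ast)$ in your alternative ending.
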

	
\section{Conclusions and future work}

The analysis of various results in the literature regarding aggregation functions of certain mathematical structures \cite{Corazza99,Dobos98,MayorVale10,MayorVale19,MartinMayorVale11,MassaVale13,MinyaVale19,PRLVale21} reveals some interesting coincidences. This prompted us to explore whether we could establish a general framework for studying these aggregation functions that encompasses some of these findings. The objective of this paper is to develop such a framework.

To achieve this, we have focused on the concept of a category enriched over a quantale \cite{BookMonoidalTopology}, as this concept includes various mathematical structures such as partially ordered sets, extended quasi-pseudometric spaces, and fuzzy quasi-pseudometric spaces, among others. We define preser\-ving functions between quantales as functions that induce a functor between the corresponding enriched categories. We have demonstrated that these functions are equivalent to lax morphisms of quantales, showing that these morphisms can be considered an appropriate extension of aggregation functions.

Furthermore, to illustrate the applicability of this general framework, we have derived results about the aggregation of extended quasi-pseudometrics and fuzzy quasi-pseudometrics based on our framework.

For future research, we aim to explore other mathematical structures where we can apply this theory. Additionally, we intend to investigate potential applications of this theory in areas such as computer science \cite{MayorVale10} and location analysis \cite{MinyaVale19}.

\section*{Acknowledgment}

We sincerely thank the anonymous referees for their valuable comments and suggestions, which have improved the quality of this paper.

	
\end{document}